\newtheorem{lemma}{Lemma}
\newtheorem{thm}{Theorem}
\newtheorem{defi}{Definition}
\newtheorem{prop}{Proposition}
\newcommand{\qed}{\mbox{}\hspace*{\fill}\nolinebreak\mbox{$\rule{0.6em}{0.6em}
$}}
\newenvironment{proof}{{\bf Proof\,\,}}{\endproof\par}
\newcounter{spb}
\def \openbox{$\sqcup\llap{$\sqcap$}$}
\def \endproof{\enskip \null \nobreak \hfill \openbox \par}
\newcommand{\bz}{\mathbf 0}
\newcommand{\R}{\mathbb R}
\begin{document}
\title{Quadratic Optimization with Orthogonality Constraints: \\ Explicit {\L}ojasiewicz Exponent and Linear Convergence of Line-Search Methods}
\author{Huikang Liu\thanks{Department of Systems Engineering and Engineering Management, The Chinese University of Hong Kong, Shatin, N.~T., Hong Kong.  E--mail: {\tt hkliu@se.cuhk.edu.hk}} \and Weijie Wu\thanks{Department of Systems Engineering and Engineering Management, The Chinese University of Hong Kong, Shatin, N.~T., Hong Kong.  E--mail: {\tt wwu@se.cuhk.edu.hk}} \and Anthony Man--Cho So\thanks{Department of Systems Engineering and Engineering Management, and, by courtesy, CUHK--BGI Innovation Institute of Trans--omics, The Chinese University of Hong Kong, Shatin, N.~T., Hong Kong.  E--mail: {\tt manchoso@se.cuhk.edu.hk}}}

\date{\today}
\maketitle

\begin{abstract}
A fundamental class of matrix optimization problems that arise in many areas of science and engineering is that of quadratic optimization with orthogonality constraints. Such problems can be solved using line-search methods on the Stiefel manifold, which are known to converge globally under mild conditions. To determine the convergence rate of these methods, we give an explicit estimate of the exponent in a {\L}ojasiewicz inequality for the (non-convex) set of critical points of the aforementioned class of problems.  By combining such an estimate with known arguments, we are able to establish the linear convergence of a large class of line-search methods.  A key step in our proof is to establish a local error bound for the set of critical points, which may be of independent interest.
\end{abstract}

\section{Introduction}
Quadratic optimization problems with orthogonality constraints constitute an important class of matrix optimization problems that have found applications in areas such as combinatorial optimization, data mining, dynamical systems, multivariate statistical analysis, and signal processing, just to mention a few (see, e.g.,~\cite{BMTZ98,M02,AMS08,JNRS10,KCS11,Saad11,S11b,YBGR12}).  A prototypical form of such problems is
\begin{equation}\label{realproblem}
\min_{X \in {\rm St}(m,n)} \,\,\, \left\{ F(X) = {\rm tr}\left( X^TAXB \right) \right\}, 
\end{equation}
where ${\rm St}(m,n) = \left\{ X \in \R^{m\times n} \mid X^TX = I_n \right\}$ (with $m\ge n$ and $I_n$ being the $n\times n$ identity matrix) is the compact Stiefel manifold and $A \in \mathcal{S}^m$, $B \in \mathcal{S}^n$ are given symmetric matrices.  Despite its simplicity, Problem~\eqref{realproblem} already has many applications, a most prominent of which is Principal Component Analysis (PCA).  One of the algorithmic approaches for solving~\eqref{realproblem} is to apply line-search methods on the manifold ${\rm St}(m,n)$.  The update formula of this family of methods takes the form
\begin{equation} \label{eq:it-form}
X_{k+1} = R\left( X_k,\alpha_k\xi_k \right) \quad\mbox{for }k=0,1,\ldots,
\end{equation}
where $\alpha_k \ge 0$ is the step size, $\xi_k$ is a search direction in the tangent space to ${\rm St}(m,n)$ at $X_k$, and $R(X_k,\cdot)$ is a so-called \emph{retraction} that maps a vector in the tangent space to ${\rm St}(m,n)$ at $X_k$ into a point on ${\rm St}(m,n)$.  In particular, the iterates produced by~\eqref{eq:it-form} are all feasible for Problem~\eqref{realproblem}.  Naturally, the choice of step sizes, search directions and the retraction will affect the convergence and efficiency of the resulting method.  For the general problem of optimizing a smooth function over the Stiefel manifold (which includes Problem~\eqref{realproblem} as a special case), various choices have been proposed over the years, and the convergence properties of the resulting methods are relatively well understood; see, e.g.,~\cite{AEK08,AMS08,AM12,WY13,JD14}.  However, very little is known about the convergence rates of these methods, even when they are applied to the much more structured problem~\eqref{realproblem}.  Part of the difficulty is due to the fact that optimization problems over the Stiefel manifold are non-convex in general.  This implies that much of the existing analysis machinery, which heavily exploits convexity, cannot be applied to such problems.  Currently, convergence rates of line-search methods for solving Problem~\eqref{realproblem} are established only under quite restrictive conditions.  For instance, Absil et al.~\cite[Theorem 4.6.3]{AMS08} showed that when $n=1$ and $B=I_n=1$ (and hence Problem~\eqref{realproblem} corresponds to minimizing the Rayleigh quotient on the unit sphere in $\R^m$), a certain line-search method will converge linearly to an eigenvector corresponding to the smallest eigenvalue $\lambda$ of $A$, provided that $\lambda$ is simple.  More recently, Shamir~\cite{S15} developed a stochastic line-search method for Problem~\eqref{realproblem} when $n=1$, $B=I_n=1$, and $A$ is negative semidefinite.  He showed that if the smallest eigenvalue $\lambda$ is simple and certain boundedness assumptions hold, then his proposed method converges linearly to an eigenvector corresponding to $\lambda$.  However, it is not clear how to extend the above results to handle the case where $n>1$ and/or the multiplicity of $\lambda$ is greater than one.  On another front, Smith~\cite{S94} showed that when used to optimize a smooth function over a Riemannian manifold, the method of steepest descent will converge linearly to a critical point if the function is \emph{strongly convex on the manifold}.  However, such a notion of convexity is much stronger than that on the Euclidean space.  In particular, it is known that every smooth function that is convex on a compact Riemannian manifold (such as the Stiefel manifold) is constant~\cite{BO69}.  Therefore, one cannot hope to obtain linear convergence results for Problem~\eqref{realproblem} using the convexity-based approach in~\cite{S94}.  Recently, there have been some endeavors to analyze the convergence rates of line-search methods for solving optimization problems over embedded submanifolds using the so-called \emph{{\L}ojasiewicz inequality}; see, e.g.,~\cite{AMA05,MN13,SU15}.  Although such an approach is extremely powerful, it has a severe limitation; namely, the exponent in the {\L}ojasiewicz inequality is often hard to determine explicitly.  Without the knowledge of such exponent, one cannot determine the exact rate of convergence of a given method.  As it turns out, the {\L}ojasiewicz exponent for general polynomial systems is known (see, e.g.,~\cite{LMP14}) and can in principle be applied to Problem~\eqref{realproblem}.  However, the exponent depends on the dimensions of the problem and leads only to very weak convergence rate results.

In view of the above discussion, our main contribution of this paper is to give a significantly sharper estimate of the {\L}ojasiewicz exponent for the non-convex problem~\eqref{realproblem}.  In particular, it is independent of the dimensions of the problem.  We achieve this by establishing a local Lipschitzian error bound for the (non-convex) set of critical points of Problem~\eqref{realproblem}, which may be of independent interest.  By combining our estimate of the {\L}ojasiewicz exponent with a well-established analysis framework in the literature~\cite{SU15}, we conclude that a host of line-search methods for solving Problem~\eqref{realproblem} converge linearly to a critical point.  It should be noted that our convergence result does not require any restriction on the eigenvalues of $A$ and $B$.  Thus, it is qualitatively different from those in~\cite{AMS08,S15}.  Moreover, although our work is similar in spirit as~\cite{LT93,T10,S13,ZZS15}, there is a crucial difference: While the latter deals exclusively with convex optimization problems, the former considers an optimization problem in which neither the objective function nor the constraint is convex.

Besides the notations introduced earlier, we shall use $\mathcal{O}^n$ to denote the set of $n\times n$ orthogonal matrices (in particular, we have $\mathcal{O}^n={\rm St}(n,n)$); ${\rm Diag}(x_1,\ldots,x_n)$ to denote the diagonal matrix with $x_1,\ldots,x_n$ on the diagonal; ${\rm BlkDiag}(A_1,\ldots,A_n)$ to denote the block diagonal matrix whose diagonal blocks are $A_1,\ldots,A_n$.  Given a matrix $Y \in \R^{m\times n}$ and a non-empty closed set $\mathcal{X} \subset \R^{m\times n}$, we shall use ${\rm dist}(Y,\mathcal{X})$ to denote the distance of $Y$ to $\mathcal{X}$; i.e., ${\rm dist}(Y,\mathcal{X})=\min_{X\in\mathcal{X}}\|X-Y\|_F$.  Other notations are standard.

\section{Background}
\subsection{First-Order Optimality Condition and Descent Directions}
To begin, let us introduce some basic definitions and concepts.  We view ${\rm St}(m,n)$ as an embedded submanifold of $\R^{m\times n}$ with the inherited Riemannian metric $\langle \cdot,\cdot \rangle$ given by $\langle X,Y \rangle = {\rm tr}\left( X^TY \right)$.  For any $X\in{\rm St}(m,n)$, the tangent space to ${\rm St}(m,n)$ at $X$ is given by $T(X) = \left\{ Y \in \R^{m\times n} \mid X^TY + Y^TX = \bz \right\}$.  The gradient of $F(X)={\rm tr}\left( X^TAXB \right)$ is $\nabla F(X)=2AXB$, and its orthogonal projection onto $T(X)$ is given by
\begin{align*}
{\rm grad}\,F(X) &= \left( I_m - XX^T \right)\nabla F(X) + \frac{1}{2}X\left( X^T\nabla F(X) - \nabla F(X)^TX \right) \\
&= 2AXB - XX^TAXB - XBX^TAX.
\end{align*}
Let $\mathcal{X} = \left\{ X \in {\rm St}(m,n) \mid {\rm grad}\,F(X) = \bz \right\}$ be the set of critical points of Problem~\eqref{realproblem}.  The following proposition gives a characterization of $\mathcal{X}$:
\begin{prop} \label{prop:grad-eqv}
Let $X\in{\rm St}(m,n)$ be given.  Then, the following are equivalent:
\begin{enumerate}
\item[(i)] ${\rm grad}\,F(X) = \bz$.

\item[(ii)] $\nabla F(X) - X\nabla F(X)^TX = \bz$.

\item[(iii)] For any $\rho>0$, $D_\rho(X) = \nabla F(X) - X\left( 2\rho\nabla F(X)^TX + (1-2\rho)X^T\nabla F(X) \right) = \bz$.
\end{enumerate}
\end{prop}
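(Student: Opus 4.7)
The plan is to write $G=\nabla F(X)$ and first simplify the given formula for ${\rm grad}\,F(X)$. Expanding the two pieces, one obtains
\[
{\rm grad}\,F(X) \;=\; G - \tfrac{1}{2}X\bigl(X^TG + G^TX\bigr),
\]
which is the workhorse identity for the rest of the argument. Note also that specialising $\rho=1/2$ in the expression for $D_\rho(X)$ gives $G - XG^TX$, which is exactly the quantity in (ii). So the statement really amounts to proving two equivalences: (i)$\Leftrightarrow$(ii), and (ii)$\Leftrightarrow$(iii), with the ``$\rho=1/2$'' specialisation doing most of the work for the second one.

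For (i)$\Rightarrow$(ii), I would left-multiply the equation $G = \tfrac12 X(X^TG+G^TX)$ by $X^T$ and use $X^TX=I_n$ to deduce $X^TG = \tfrac12(X^TG+G^TX)$, which rearranges to $X^TG = G^TX$. Feeding this symmetry back into the original identity gives $G = XX^TG = X(G^TX) = XG^TX$, which is (ii). For (ii)$\Rightarrow$(i), starting from $G=XG^TX$ and left-multiplying by $X^T$ again yields $X^TG = G^TX$, so $X^TG + G^TX = 2X^TG$ and hence $G - \tfrac12 X(X^TG + G^TX) = G - XX^TG = G - XG^TX = 0$, which is (i).

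For (ii)$\Leftrightarrow$(iii), the direction (iii)$\Rightarrow$(ii) is immediate by setting $\rho=1/2$: then $2\rho G^TX+(1-2\rho)X^TG = G^TX$, so $D_{1/2}(X) = G - XG^TX$, and this vanishes by hypothesis. For the converse (ii)$\Rightarrow$(iii), I would reuse the symmetry $X^TG=G^TX$ already established from (ii): for any $\rho>0$,
\[
2\rho G^TX + (1-2\rho)X^TG \;=\; 2\rho X^TG + (1-2\rho)X^TG \;=\; X^TG,
\]
so $D_\rho(X) = G - XX^TG$, and we saw that this equals $0$ under (ii).

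There is no real obstacle here; the proof is essentially a bookkeeping exercise in multiplying by $X^T$ and exploiting $X^TX=I_n$. The only ``insight'' needed is to recognise that the critical-point equation forces the matrix $X^TG$ to be symmetric, after which every one of the three formulations collapses to the single identity $G = XX^TG$.
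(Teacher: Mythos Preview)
Your proof is correct. The paper takes a different, slightly slicker route for (i)$\Leftrightarrow$(ii): it observes the factorization
\[
{\rm grad}\,F(X) \;=\; \left(I_m - \tfrac12 XX^T\right)\left(G - XG^TX\right),
\]
and since $I_m - \tfrac12 XX^T$ is invertible (its eigenvalues on ${\rm St}(m,n)$ are $1$ and $1/2$), one side vanishes iff the other does. For (ii)$\Leftrightarrow$(iii) the paper simply cites a lemma from the literature rather than arguing directly. Your approach is more hands-on: you extract the symmetry $X^TG = G^TX$ by left-multiplying by $X^T$, after which all three conditions collapse to $G = XX^TG$. The paper's factorization is more economical for (i)$\Leftrightarrow$(ii), but your argument is fully self-contained and equally elementary. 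One small remark: if (iii) is read as ``$D_\rho(X)=\bz$ for a single fixed $\rho>0$'' (which is how the paper uses it downstream) rather than ``for all $\rho>0$'', your (iii)$\Rightarrow$(ii) step cannot simply specialise to $\rho=1/2$; instead, left-multiply $D_\rho(X)=\bz$ by $X^T$ to get $2\rho X^TG = 2\rho G^TX$, hence $X^TG=G^TX$ since $\rho>0$, and proceed as before. This is a one-line fix.
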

\begin{proof}
The equivalence between (ii) and (iii) is established in~\cite[Lemma 2.1]{JD14}.  To prove the equivalence between (i) and (ii), observe that
\begin{align*}
{\rm grad}\,F(X) &= \left( I_m - \frac{1}{2}XX^T \right) \nabla F(X) - \frac{1}{2}X\nabla F(X)^TX \\
&= \left( I_m - \frac{1}{2}XX^T \right) \left( \nabla F(X) - X\nabla F(X)^TX \right). 
\end{align*}
Now, it remains to note that $I_m-(1/2)XX^T$ is invertible.
\end{proof}
It is easy to verify that $D_\rho(X) \in T(X)$ for any $\rho>0$.  Moreover, as shown in~\cite[Lemma 3.1]{JD14}, $-D_\rho(X)$ is a descent direction at $X\in{\rm St}(m,n)$ for any $\rho>0$.  Hence, in the sequel, we shall focus on line-search methods that use $-D_\rho(\cdot)$ as the search direction.

\subsection{Retraction}
Another ingredient in line-search methods for optimizing over ${\rm St}(m,n)$ is a retraction:
\begin{defi}{(Retraction)}
A map $R: \bigcup_{X\in {\rm St}(m,n)} \{X\}\times T(X) \rightarrow {\rm St}(m,n)$ will be called a retraction, if for any fixed $X\in{\rm St}(m,n)$ and $\xi\in T(X)$ it holds that $\xi\mapsto R(X,\xi)$ is continuous on $T(X)$, and for all $X\in {\rm St}(m,n)$,
\begin{equation}\label{retraction}
\lim_{T(X)\ni\xi\rightarrow 0} \frac{\left\|R(X, \xi)-(X+\xi)\right\|_F}{\left\|\xi \right\|_F}=0.
\end{equation}
\end{defi}
Various smooth retractions on the Stiefel manifold have been proposed in the literature.  These include the polar decomposition-based retraction, the QR-decomposition-based retraction, the Cayley transform, and the Riemannian exponential mapping.  We refer the reader to~\cite{AMS08,KFT13} for details of these retractions.  In Section~\ref{sec:exp}, we shall conduct numerical experiments with these four retractions.

\subsection{Step Sizes}
To complete the specification of a line-search method, it remains to choose the step sizes.  This is done in the following:
\begin{defi}{(Armijo Point)}
Let $\gamma>0$, $\beta, c\in(0, 1)$ be given constants. The number
\begin{equation}\label{armijo}
\alpha = \max\left\{ \beta^n\gamma \mid n\ge0, F\left( R\left(X,-\beta^n\gamma D_\rho(X)\right) \right) - F(X) \leq -c\beta^n\gamma \nabla F(X)^TD_\rho(X) \right\}
\end{equation}
is called the Armijo point at $X\in{\rm St}(m,n)$ with parameters $(\gamma,\beta,c)$.
\end{defi}
Since the smooth retraction~\eqref{retraction} is  a first-order approximation, the left hand side approximate the first-order derivative along $ -\beta^n\gamma D_\rho$ when $m$ is large enough.  Consequently, the Armijo point exists.  We refer the reader to~\cite{SU15} for details.

We summarize the line-search method in Algorithm~\ref{alg:1}.
\begin{algorithm}[htb]
\caption{Line-Search Method on the Stiefel manifold} 
\label{alg:1} 
\begin{algorithmic}[1]
\REQUIRE  Select $X_0\in {\rm St}(m,n)$, $\rho>0$, $\beta, c\in(0,1)$.
\FOR{$k=0,1,2,\ldots $}
\STATE  Calculate the descent direction $-D_\rho(X_k)$ at $X_k$.
\STATE  Choose $\bar{\beta_k}\geq 1$ and find the Armijo point $\alpha_k$ at $X_k$ with parameters $(\bar{\beta_k}, \beta, c)$.
\STATE   Set $X_{k+1}=R\left( X_k,-\alpha_kD_{\rho}(X_k) \right)$.
\ENDFOR
\end{algorithmic}
\end{algorithm}

\subsection{Convergence Analysis Framework for the Line-Search Method}
To analyze the convergence properties of Algorithm~\ref{alg:1}, we adopt the framework introduced in~\cite{SU15}.  It has been shown in~\cite[Corollary 2.9]{SU15} that Algorithm~\ref{alg:1} has the following properties:
\begin{itemize}
\item[$\bullet$] (Primary Descent) There exists a constant $\sigma > 0$ such that for all $k$ large enough,
$$
F(X_{k+1})-F(X_k)\leq -\sigma\left\|D_{\rho}(X_k)\right\|_F\left\|X_{k+1}-X_k\right\|_F.
$$
\item[$\bullet$] (Stationarity) For all $k$ large enough, 
$$
\left\|D_{\rho}(X_k)\right\|_F=0\quad \Longrightarrow\quad X_{k+1}=X_k.
$$
\end{itemize}
Moreover, we show in the appendix that Algorithm~\ref{alg:1} has the following property:
\begin{prop} \label{prop:asy-safe}
(Asymptotic Small Step Size Safeguard) There exists a constant $\kappa > 0$ such that for all $k$ large enough,
\begin{equation} \label{eq:asy-safe}
\left\|X_{k+1}-X_k\right\|_F\geq \kappa\left\|D_{\rho}(X_k)\right\|_F.
\end{equation}
\end{prop}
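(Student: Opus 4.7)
The plan is to show that the Armijo step size $\alpha_k$ is uniformly bounded away from zero and then combine this with the first-order accuracy of the retraction to obtain~\eqref{eq:asy-safe}. The central intermediate ingredient, which I would establish first, is the angle condition $\langle \nabla F(X), D_\rho(X)\rangle \geq \mu \|D_\rho(X)\|_F^2$ for all $X \in \mathrm{St}(m,n)$, with an explicit $\mu = \mu(\rho) > 0$.

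To prove this condition I would decompose tangent vectors in a basis adapted to $X$. Complete $X$ to $[X\;X_\perp] \in \mathcal{O}^m$, write $\nabla F(X) = XM + X_\perp K$ with $M = X^T\nabla F(X)$ and $K = X_\perp^T \nabla F(X)$, and split $M = S_s + S_a$ into its symmetric and anti-symmetric parts. A direct substitution into the formula for $D_\rho$ in Proposition~\ref{prop:grad-eqv}(iii) yields
\[
D_\rho(X) = 4\rho\, X S_a + X_\perp K,
\]
so that $\langle \nabla F(X), D_\rho(X)\rangle = 4\rho\|S_a\|_F^2 + \|K\|_F^2$ and $\|D_\rho(X)\|_F^2 = 16\rho^2\|S_a\|_F^2 + \|K\|_F^2$; the angle condition then follows with $\mu = \min\{1, 1/(4\rho)\}$.

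Next I would bound $\alpha_k$ from below. If the initial trial $\alpha_k = \bar\beta_k$ already passes Armijo, then $\alpha_k \geq 1$; otherwise $\alpha_k/\beta$ violates Armijo. Since $F$ is quadratic (hence $\nabla F$ is Lipschitz) and the smooth retractions used in the paper satisfy a uniform second-order bound $\|R(X,\xi)-(X+\xi)\|_F \leq C_R\|\xi\|_F^2$ on the compact manifold, a Taylor argument gives $F(R(X,-\alpha D_\rho(X))) - F(X) \leq -\alpha\langle \nabla F(X), D_\rho(X)\rangle + C_0 \alpha^2 \|D_\rho(X)\|_F^2$ for a constant $C_0$ independent of $X$ and $\alpha$. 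Plugging this into the Armijo failure at $\alpha_k/\beta$ and invoking the angle condition produces $\alpha_k \geq \alpha_* := \beta(1-c)\mu/C_0 > 0$; combined with the first case, $\alpha_k \geq \alpha_{\min} := \min\{1,\alpha_*\}$ for every $k$.

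Finally, telescoping the Armijo inequalities together with the boundedness of $F$ on $\mathrm{St}(m,n)$ gives $\sum_k \alpha_k \|D_\rho(X_k)\|_F^2 < \infty$, and the uniform lower bound $\alpha_k \geq \alpha_{\min}$ forces $\|D_\rho(X_k)\|_F \to 0$, so $\alpha_k\|D_\rho(X_k)\|_F \to 0$. The first-order retraction property~\eqref{retraction} then yields, for $k$ large enough, $\|X_{k+1}-X_k\|_F \geq \tfrac{1}{2}\alpha_k\|D_\rho(X_k)\|_F \geq \tfrac{1}{2}\alpha_{\min}\|D_\rho(X_k)\|_F$, which is~\eqref{eq:asy-safe} with $\kappa = \alpha_{\min}/2$. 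The main technical obstacle is securing the uniform second-order upper model of $F \circ R$ that underlies the lower bound on $\alpha_k$; for the polar, QR, Cayley and exponential retractions used in the paper this is standard, but it relies essentially on the compactness of $\mathrm{St}(m,n)$ and on the smoothness of $R$ beyond the mere first-order definition.
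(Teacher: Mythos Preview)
Your argument is correct and, in fact, more complete than the paper's own proof. Both proofs share the same final step: first show $\alpha_k\|D_\rho(X_k)\|_F\to 0$ (the paper does this via its Lemma~\ref{limit}, you do it via summability plus your step-size lower bound), and then use the first-order retraction property~\eqref{retraction} together with $\|V_k\|_F\to 0$ to obtain $\|X_{k+1}-X_k\|_F\ge\tfrac12\|V_k\|_F=\tfrac12\alpha_k\|D_\rho(X_k)\|_F$ for large $k$. The paper's proof stops there and declares $\kappa=\tfrac12$; as written this tacitly requires $\alpha_k$ to be bounded away from zero, which the paper never verifies. You supply precisely this missing piece: the explicit angle inequality $\langle\nabla F(X),D_\rho(X)\rangle\ge\mu\|D_\rho(X)\|_F^2$ (obtained via the clean $X/X_\perp$ decomposition of $D_\rho$) combined with a uniform second-order retraction bound yields $\alpha_k\ge\alpha_{\min}>0$, and hence $\kappa=\alpha_{\min}/2$.

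The trade-off is that your route needs the quadratic estimate $\|R(X,\xi)-(X+\xi)\|_F\le C_R\|\xi\|_F^2$, which is stronger than the bare first-order definition of retraction stated in the paper. As you correctly note, this holds for all four retractions the paper actually uses (polar, QR, Cayley, exponential) by smoothness and compactness of ${\rm St}(m,n)$, so the extra hypothesis is harmless in context. In short: the paper's proof is shorter but leaves a gap; your proof fills that gap at the modest cost of a standard additional regularity assumption on the retraction.
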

Thus, by~\cite[Theorem 2.3]{SU15}, in order to establish the linear convergence of Algorithm~\ref{alg:1} to a critical point of Problem~\eqref{realproblem}, it remains to prove the following theorem:
\begin{thm} \label{thm:loj-ineq}
({\L}ojasiewicz Inequality for Quadratic Optimization with Orthogonality Constraints) There exist constants $\delta,\eta > 0$ such that for all $X \in {\rm St}(m,n)$ and $X^* \in \mathcal{X}$ with $\left\|X-X^*\right\|_F \le \delta$,
$$
|F(X)-F(X^*)|^{1/2}\leq \eta\left\|D_{\rho}(X)\right\|_F.
$$
\end{thm}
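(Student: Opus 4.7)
The plan is to reduce Theorem~\ref{thm:loj-ineq} to a local Lipschitzian error bound for $\mathcal{X}$ and then combine it with a second-order Taylor argument at critical points.

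First I would describe $\mathcal{X}$ spectrally. Diagonalize $B = Q\Sigma Q^T$ with $Q\in\mathcal{O}^n$ and $\Sigma$ diagonal; the substitution $Y=XQ$ preserves Stiefel feasibility, the objective value, the critical set, and (by a direct computation) $\|D_\rho(X)\|_F$, so we may assume $B = {\rm Diag}(b_1,\ldots,b_n)$. For $X\in\mathcal{X}$, Proposition~\ref{prop:grad-eqv}(ii) gives $AXB = X\Lambda B$ with $\Lambda := X^TAX$; left-multiplying by $X^T$ shows $\Lambda B = B\Lambda$, so $\Lambda$ is block-diagonal with respect to the equal-value blocks of $B$. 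A further orthogonal rotation within each such block (which preserves $F$ and $\mathcal{X}$) diagonalizes $\Lambda$, and the critical condition decouples column by column into $A x_i b_i = \lambda_i x_i$. In particular, the connected component of $\mathcal{X}$ containing $X^*$ consists of orthogonal rotations of the columns of $X^*$ within their assigned $A$-eigenspaces (inside each $B$-equal block), and $F$ is constant on this component.

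The heart of the proof is the local error bound
\[
{\rm dist}(X,\mathcal{X}) \le c\,\|D_\rho(X)\|_F
\]
for $X\in{\rm St}(m,n)$ sufficiently close to $X^*$. Guided by the description above, I would construct an explicit candidate $\tilde X\in\mathcal{X}$ near $X$ by projecting the columns of $X$ onto the appropriate $A$-invariant subspaces used by $X^*$ and then orthonormalizing via polar decomposition. Since
\[
D_\rho(X) = 2\bigl(AXB - 2\rho\,XB\Lambda - (1-2\rho)\,X\Lambda B\bigr)
\]
vanishes exactly on $\mathcal{X}$, its size controls both (a) the failure of $\Lambda$ to commute with $B$ and (b) the angles between the columns of $X$ and the active $A$-eigenspaces; Weyl- and Davis--Kahan-type perturbation estimates, which depend only on the eigengaps of $A$ and $B$ at $X^*$, should then yield the desired linear bound.

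Given the error bound, the {\L}ojasiewicz inequality drops out by Taylor expansion. Let $\tilde X$ achieve ${\rm dist}(X,\mathcal{X})$ and set $\xi = X-\tilde X$; for $\|X-X^*\|_F$ small enough, $\tilde X$ lies in the component of $\mathcal{X}$ through $X^*$, so $F(\tilde X) = F(X^*)$. Expanding,
\[
F(X) - F(X^*) = \langle \nabla F(\tilde X),\xi\rangle + O(\|\xi\|_F^2).
\]
Since $\tilde X$ is critical, $\nabla F(\tilde X)$ lies in the normal space to $T(\tilde X)$; and since $X,\tilde X\in{\rm St}(m,n)$, we have $\tilde X^T\xi + \xi^T\tilde X = -\xi^T\xi$, so the normal part of $\xi$ is itself $O(\|\xi\|_F^2)$. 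Hence the linear term is $O(\|\xi\|_F^2)$ and $|F(X)-F(X^*)| \le C'\|D_\rho(X)\|_F^2$, which is the claimed inequality. The main obstacle will be the error bound itself: $\mathcal{X}$ is a semi-algebraic union of smooth pieces of varying dimension that meet wherever eigenvalues of $A$ or $B$ coincide, so no uniform implicit-function-theorem argument applies; the spectral reduction above is essential, and carefully coupling the $A$- and $B$-eigenspace multiplicities is the most delicate part of the analysis.
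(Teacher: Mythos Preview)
Your overall architecture matches the paper exactly: reduce Theorem~\ref{thm:loj-ineq} to a local Lipschitzian error bound for $\mathcal{X}$ (the paper's Theorem~\ref{theorem:A5}), and then upgrade the error bound to the {\L}ojasiewicz inequality via a second-order expansion at a critical point. Your Taylor/normal-space argument---using $\tilde X^T\xi+\xi^T\tilde X=-\xi^T\xi$ to show that the first-order term $\langle\nabla F(\tilde X),\xi\rangle$ is itself $O(\|\xi\|_F^2)$---is precisely the paper's Proposition~\ref{prop:2-hold}, just phrased geometrically. Your observation that $F$ is constant on each component of $\mathcal{X}$ is correct and in fact slightly sharpens the paper's final step, which literally only treats the nearest $X^*$.

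The divergence is in how the error bound itself is obtained. You propose to build a candidate $\tilde X\in\mathcal{X}$ by projecting columns of $X$ onto the active $A$-eigenspaces and re-orthonormalizing, then invoke Davis--Kahan/Weyl estimates to control $\|X-\tilde X\|_F$ by $\|D_\rho(X)\|_F$. The paper instead works entirely in coordinates: after diagonalizing both $A$ and $B$, it partitions $\mathcal{X}=\bigcup_{h}\mathcal{X}_h$, rotates $X$ into a canonical frame $\bar X$ aligned with the nearest $\mathcal{X}_h$, and shows by direct computation that ${\rm dist}^2(X,\mathcal{X}_h)$ and $\|AXB-XBX^TAX\|_F^2$ are both $\Theta\bigl(\sum_{j}\sum_{k\in\mathcal{I}_j}\|[\bar X_j]_k\|_2^2\bigr)$ for the same index sets $\mathcal{I}_j$ (Propositions~\ref{prop:sq-dist-bd}--\ref{prop:grad-term-bd}). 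Your route is more conceptual and, if it goes through, would explain \emph{why} the exponent is $1/2$; the paper's route is more elementary and makes the constants explicit. The genuine difficulty you flag---coupling the $A$- and $B$-multiplicities---is exactly where the paper spends its effort (Proposition~\ref{prop:grad-term-bd} and Lemma~\ref{prop:subp-opt}), and a bare Davis--Kahan argument does not obviously handle the case where the relevant gap vanishes (columns of $X^*$ sitting in the same $A$-eigenspace but different $B$-blocks); you would need to argue block by block much as the paper does, so the savings may be smaller than hoped.
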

The proof of Theorem~\ref{thm:loj-ineq} is based on the following two results:
\begin{thm}\label{theorem:A5}
(Local Error Bound for Quadratic Optimization with Orthogonality Constraints) There exist constants $\delta,\eta>0$ such that 
$$
{\rm dist}(X,\mathcal{X}) \le \eta \| D_\rho(X) \|_F \quad\mbox{whenever }  X\in{\rm St}(m,n) \mbox{ and } {\rm dist}(X,\mathcal{X}) \le \delta.
$$
\end{thm}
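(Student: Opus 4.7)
Since ${\rm St}(m,n)$ is compact and $\mathcal{X}$ closed, it suffices to prove the error bound locally at each fixed $X^* \in \mathcal{X}$ and patch the local bounds via a finite subcover. An orthogonal change of variables $X \mapsto XV$ (with $V$ diagonalizing $B$) preserves both ${\rm St}(m,n)$ and $F$, so we may assume $B = {\rm BlkDiag}(\mu_1 I_{n_1}, \ldots, \mu_k I_{n_k})$ with $\mu_1, \ldots, \mu_k$ distinct. Partition $X = [X^{(1)}, \ldots, X^{(k)}]$ accordingly and read off condition (ii) of Proposition~\ref{prop:grad-eqv} block by block: $X \in \mathcal{X}$ iff (a)~$X^{(i)T} A X^{(j)} = 0$ for all $i \ne j$, and (b)~$(I_m - XX^T) A X^{(j)} = 0$ for all $j$ with $\mu_j \ne 0$. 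Geometrically, each ${\rm range}(X^{(j)})$ with $\mu_j \ne 0$ is an $A$-invariant subspace and the blocks of $X$ are mutually $A$-orthogonal, i.e., $X^T AX$ is block-diagonal in the block pattern of $B$.

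\textbf{Key identities.} A short calculation yields
\begin{equation*}
X^T D_\rho(X) \;=\; 4\rho\, [X^T AX,\, B], \qquad (I_m - XX^T)\, D_\rho(X) \;=\; 2(I_m - XX^T)\, AXB,
\end{equation*}
and these are orthogonal components of $D_\rho(X)$ in $\mathbb{R}^{m \times n}$, so $\|[X^T AX, B]\|_F \le (4\rho)^{-1}\|D_\rho(X)\|_F$ and $\|(I_m - XX^T) AXB\|_F \le \tfrac{1}{2}\|D_\rho(X)\|_F$. Reading these off in the block structure of $B$ gives, for $i \ne j$, $\|X^{(i)T} A X^{(j)}\|_F \le c_1 \|D_\rho(X)\|_F$ (with $c_1$ absorbing the positive spectral gaps $|\mu_i - \mu_j|$) and, for $\mu_j \ne 0$, $\|(I_m - XX^T) A X^{(j)}\|_F \le c_2 \|D_\rho(X)\|_F$. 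In short, the two defining equations of $\mathcal{X}$ in the characterization above hold up to an $O(\|D_\rho(X)\|_F)$ residual.

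\textbf{Local construction of a nearby critical point.} Fix $X^* \in \mathcal{X}$ and set $U_j^* = {\rm range}(X^{*(j)})$, which is $A$-invariant. For $X$ sufficiently close to $X^*$, the plan is to construct $\tilde X \in \mathcal{X}$ with $\|X - \tilde X\|_F = O(\|D_\rho(X)\|_F)$ in three steps. \emph{Step 1:} using the invariance residual $(I_m - XX^T) A X^{(j)}$ together with a Sylvester / Davis--Kahan $\sin\Theta$ argument, produce an $A$-invariant subspace $\tilde U_j$ at subspace distance $O(\|D_\rho(X)\|_F)$ from ${\rm range}(X^{(j)})$; the relevant spectral gap is the separation between the eigenvalues of $A$ on $U_j^*$ and on $(U_j^*)^\perp$ at the base point $X^*$. \emph{Step 2:} pick an orthonormal basis $\tilde X^{(j)}$ of $\tilde U_j$ closest to $X^{(j)}$ via the orthogonal Procrustes / polar factor, giving $\|X^{(j)} - \tilde X^{(j)}\|_F = O(\|D_\rho(X)\|_F)$. \emph{Step 3:} the cross-block overlaps of the assembled $\tilde X = [\tilde X^{(1)}, \ldots, \tilde X^{(k)}]$ are $O(\|D_\rho(X)\|_F)$ by Step~2 and by $X^T X = I_n$, so one further polar correction restores $\tilde X^T \tilde X = I_n$ at the same error scale while preserving the $A$-invariance of each block. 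By construction, $\tilde X$ satisfies both (a) and (b), hence $\tilde X \in \mathcal{X}$. A finite subcover of $\mathcal{X}$ by such local neighborhoods yields uniform global constants $\delta, \eta$.

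\textbf{Main obstacle.} The heart of the proof is Step~1. The Davis--Kahan-type estimate requires a positive spectral gap between $A|_{U_j^*}$ and $A|_{(U_j^*)^\perp}$, which can collapse when the eigenvalues of $A$ are repeated across the interface between the two subspaces; in that degenerate case the family of $A$-invariant subspaces near $U_j^*$ becomes positive-dimensional and no canonical ``nearest'' choice exists. The resolution is that the extra free directions coincide with the tangent space of $\mathcal{X}$ at $X^*$, so it suffices to exhibit some $\tilde U_j$ at the correct distance; the sharp form of the perturbation estimate needed here replaces the scalar eigenvalue gap with the gap between those spectral components of $A$ that genuinely separate across $U_j^*$, a quantity that is positive precisely because $X^*$ is critical with the corresponding block structure. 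Handling this delicate case is where the bulk of the technical work lies, and it is what allows the final estimate to be dimension-independent rather than exhibiting the weak exponents of the generic polynomial Lojasiewicz bound.
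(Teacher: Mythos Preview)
Your route is genuinely different from the paper's. The paper diagonalizes \emph{both} $A$ and $B$, writes down an explicit parametrization of $\mathcal{X}$ as a finite union $\bigcup_{h\in\mathcal{H}}\mathcal{X}_h$ with each $\mathcal{X}_h$ a product of orthogonal groups acting through a fixed pattern matrix $E(h)$, and then bounds ${\rm dist}(X,\mathcal{X}_h)$ by solving a nested orthogonal Procrustes problem and doing direct entry-wise estimates (their Propositions~3--5). Your approach keeps $A$ general, extracts the two residuals $[X^TAX,B]$ and $(I-XX^T)AXB$ from $D_\rho$, and then tries to manufacture a nearby critical point via Davis--Kahan plus a polar clean-up. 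Your key identities and your block characterization of $\mathcal{X}$ are correct, and the overall plan is a reasonable alternative to the paper's coordinate calculus.

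There is, however, a concrete gap beyond the one you already flag. In Step~3 you assert that ``one further polar correction restores $\tilde X^T\tilde X=I_n$ \emph{while preserving the $A$-invariance of each block}.'' This is not true in general: the polar factor $\tilde X(\tilde X^T\tilde X)^{-1/2}$ mixes columns across blocks whenever the off-diagonal blocks of $\tilde X^T\tilde X$ are nonzero, and the resulting $j$-th block need not span an $A$-invariant subspace. In the simple-eigenvalue case this is harmless, because the nearby $A$-invariant subspaces $\tilde U_j$ are forced to be direct sums of one-dimensional eigenspaces and hence are automatically mutually orthogonal (so Step~3 is vacuous). But once $A$ has repeated eigenvalues, two blocks $\tilde U_i,\tilde U_j$ produced independently by Davis--Kahan may draw overlapping pieces from the same eigenspace; you then need to \emph{coordinate} the choices of $\tilde U_j$ across $j$ so that they are simultaneously $A$-invariant and mutually orthogonal, and this cannot be fixed by a polar correction after the fact. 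This coordination problem is precisely the degenerate-gap issue you identify in your last paragraph, so Steps~1 and~3 are not independent difficulties but the same one, and your resolution (``the extra free directions coincide with the tangent space of $\mathcal{X}$'') is the right intuition but is not yet an argument. The paper sidesteps all of this by never invoking Davis--Kahan: its explicit parametrization of $\mathcal{X}_h$ already builds in the eigenspace multiplicities of $A$ (via the blocks $P_i\in\mathcal{O}^{s_i-s_{i-1}}$) and of $B$ (via $Q_j$), so the degenerate case requires no separate treatment.
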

We defer the proof of Theorem~\ref{theorem:A5} to Section~\ref{sec:pf}.
\begin{prop} (2-H\"{o}lder Continuity of $F$) \label{prop:2-hold}
There exists a constant $\eta>0$ such that for all $X\in{\rm St}(m,n)$ and $X^*\in\mathcal{X}$,
$$ |F(X)-F(X^*)| \le \eta\|X-X^*\|_F^2. $$
\end{prop}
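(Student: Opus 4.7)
The plan is to write $H = X - X^*$ and expand $F(X) - F(X^*)$ explicitly, separating the contribution that is linear in $H$ from the quadratic tail, and then to argue that the linear part is actually quadratic in $H$ once we take into account both the critical-point condition at $X^*$ and the constraint $X^T X = (X^*)^T X^* = I_n$.

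Concretely, expanding $F(X) = \mathrm{tr}(X^T A X B)$ around $X^*$ yields
\[
F(X) - F(X^*) = \langle \nabla F(X^*), H \rangle + \mathrm{tr}(H^T A H B),
\]
since $F$ is a homogeneous quadratic. The second term is bounded by $\|A\|_2\|B\|_2\|H\|_F^2$ directly, so the task reduces to controlling $\langle \nabla F(X^*), H \rangle$. I would rewrite this as $\mathrm{tr}\bigl(S (X^*)^T H\bigr)$, where $S := \nabla F(X^*)^T X^*$. The key observation is that $S$ is symmetric: by Proposition~\ref{prop:grad-eqv}(ii) the critical-point condition gives $\nabla F(X^*) = X^* \nabla F(X^*)^T X^*$; transposing and multiplying by $X^*$ on the right gives $\nabla F(X^*)^T X^* = (X^*)^T \nabla F(X^*)$, so $S = S^T$.

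Next, I would use the Stiefel constraint on both $X$ and $X^*$: expanding $(X^* + H)^T(X^* + H) = I_n$ yields
\[
(X^*)^T H + H^T X^* = -H^T H,
\]
so the symmetric part of $(X^*)^T H$ equals $-\tfrac{1}{2} H^T H$. Since $S$ is symmetric, $\mathrm{tr}(S \cdot A) = 0$ for any antisymmetric $A$, hence
\[
\langle \nabla F(X^*), H \rangle = \mathrm{tr}\bigl(S \cdot \tfrac{1}{2}((X^*)^T H + H^T X^*)\bigr) = -\tfrac{1}{2}\mathrm{tr}(S H^T H).
\]
This expression is already quadratic in $H$, and can be bounded by $\tfrac{1}{2}\|S\|_2 \|H\|_F^2$. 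Since $\|S\|_2 \le \|\nabla F(X^*)\|_2 \|X^*\|_2 \le 2\|A\|_2\|B\|_2$ uniformly over $X^* \in \mathrm{St}(m,n)$, combining the two bounds yields the desired constant $\eta$ depending only on $\|A\|_2$ and $\|B\|_2$.

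I do not anticipate a genuine obstacle; the only delicate point is recognizing that the critical-point condition forces the matrix $\nabla F(X^*)^T X^*$ to be symmetric, which is exactly what allows the ostensibly linear-in-$H$ term to be absorbed into the quadratic estimate via the Stiefel identity $(X^*)^T H + H^T X^* = -H^T H$. Note that the resulting inequality is in fact global on $\mathrm{St}(m,n) \times \mathcal{X}$, which matches the statement of the proposition.
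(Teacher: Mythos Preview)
Your argument is correct and follows essentially the same route as the paper: both proofs reduce to the identity $2\langle \nabla F(X^*), X-X^*\rangle = -\langle \nabla F(X^*)^T X^*, (X-X^*)^T(X-X^*)\rangle$, obtained from the critical-point condition together with the Stiefel identity $(X^*)^T H + H^T X^* = -H^T H$. The only cosmetic differences are that you use the exact second-order expansion of the quadratic $F$ (the paper invokes the Lipschitz-gradient bound instead) and that you isolate the symmetry of $S=\nabla F(X^*)^T X^*$ explicitly, whereas the paper recovers the same cancellation by computing $\langle \nabla F(X^*), X-X^*\rangle$ in two ways and adding; your version yields a cleaner uniform constant depending only on $\|A\|_2\|B\|_2$.
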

\begin{proof}
Observe that $F$, when viewed as a function on $R^{m\times n}$, is continuously differentiable with Lipschitz continuous gradient.  Thus, we have
\begin{equation} \label{eq:grad-lip}
\left| F(X)-F(X^*)-\langle\nabla F(X^*), X-X^*\rangle \right| \leq \frac{L}{2}\left\|X-X^*\right\|_F^2,
\end{equation}
where $L>0$ is the Lipschitz constant of $\nabla F$; see, e.g.,~\cite{N04}.  Now, by Proposition~\ref{prop:grad-eqv}, we have $\nabla F(X^*)=X^*\nabla F(X^*)^TX^*$.  This implies that
\begin{equation}\label{eq:part1}
\langle\nabla F(X^*), X-X^*\rangle= \left\langle X^*\nabla F(X^*)^TX^*, X-X^*\right\rangle=\left\langle\nabla F(X^*)^TX^*, (X^*)^TX-I_n\right\rangle.
\end{equation}
On the other hand, 
\begin{align}
\left\langle\nabla F(X^*)^TX^*, I_n-X^TX^*\right\rangle &= \left\langle (X^*)^T\nabla F(X^*), (X^*)^TX^*-X^TX^* \right\rangle \nonumber \\
&= \left\langle X^*\nabla F(X^*)^TX^*, X^*-X \right\rangle \nonumber \\
&= -\langle \nabla F(X^*), X-X^*\rangle. \label{eq:part2}
\end{align}
Upon adding~\eqref{eq:part1} and~\eqref{eq:part2} and using the fact that $(X-X^*)^T(X-X^*)=2I_n-(X^*)^TX-X^TX^*$, we obtain
$$
2\langle \nabla F(X^*), X-X^*\rangle=-\left\langle\nabla F(X^*)^TX^*,(X-X^*)^T(X-X^*)\right\rangle,
$$
or equivalently,
$$
|\langle \nabla F(X^*), X-X^*\rangle|\leq\frac{1}{2}\left\|\nabla F(X^*)^TX^*\right\|_F\|X-X^*\|_F^2.
$$
This, together with~\eqref{eq:grad-lip}, yields the desired inequality with $\eta=\left(L+\left\|\nabla F(X^*)^TX^*\right\|_F\right)/2$.
\end{proof}
To complete the proof of Theorem~\ref{thm:loj-ineq}, let $X\in{\rm St}(m,n)$ and $X^*\in\mathcal{X}$ be such that ${\rm dist}(X,\mathcal{X}) = \|X-X^*\|_F \le \delta$, where $\delta>0$ is given by Theorem~\ref{theorem:A5}.  Then, by Proposition~\ref{prop:2-hold}, we obtain
$$ |F(X)-F(X^*)| \le \eta_1\|X-X^*\|_F^2 = \eta_1\cdot{\rm dist}(X,\mathcal{X})^2 \le \eta_1\eta_2 \|D_\rho(X)\|_F^2 $$
for some constants $\eta_1,\eta_2>0$, as desired.

\section{Proof of Theorem~\ref{theorem:A5}} \label{sec:pf}
We now prove Theorem~\ref{theorem:A5}, which is the main result of this paper.  The proof can be divided into four steps.
\subsection{Preliminary Observations}
Let $A = U_A\Sigma_A U_A^T$ and $B = U_B\Sigma_B U_B^T$ be spectral decompositions of $A$ and $B$, respectively.  It is straightforward to verify that $\mbox{tr}\left( X^TAXB \right) = \mbox{tr}\left( \bar{X}^T\Sigma_A\bar{X}\Sigma_B \right)$, where $\bar{X} = U_A^TXU_B \in {\rm St}(m,n)$.  Thus, we may assume without loss of generality that
$$ A = \mbox{Diag}(a_1,\ldots,a_m) \in \mathcal{S}^m \quad\mbox{and}\quad B = \mbox{Diag}(b_1,\ldots,b_n) \in \mathcal{S}^n, $$
where $a_1 \ge a_2 \ge \cdots \ge a_m$ and $b_1 \ge b_2 \ge \cdots \ge b_n$.  By Proposition~\ref{prop:grad-eqv}, we can write
\begin{equation} \label{eq:opt-set-sim}
\mathcal{X} = \left\{ X \in {\rm St}(m,n) \mid AXB - XBX^TAX = \bz \right\}.
\end{equation}
Now, it can be verified that
$$
D_\rho(X) = \left( I_m-(1-2\rho)XX^T \right)\left( \nabla F(X)-X\nabla F(X)^TX \right).
$$
Since $\rho>0$, we see that $I_m-(1-2\rho)XX^T$ is invertible and
$$
\left\| \nabla F(X)-X\nabla F(X)^TX \right\|_F \le \left\|\left( I_m-(1-2\rho)XX^T\right)^{-1}\right\| \cdot \left\|D_\rho(X)\right\|_F\le \frac{1}{2\rho}\left\|D_\rho(X)\right\|_F.
$$
In particular, in order to prove Theorem~\ref{theorem:A5}, it suffices to prove the following:

\medskip
\noindent{\bf Theorem~\ref{theorem:A5}'.} {\it There exist constants $\delta,\eta>0$ such that 
$${\rm dist}(X,\mathcal{X}) \le \eta \left\| AXB - XBX^TAX \right\|_F \quad\mbox{whenever }  X\in{\rm St}(m,n) \mbox{ and } {\rm dist}(X,\mathcal{X}) \le \delta. $$}

\subsection{Characterizing the Set of Critical Points when $B$ has Full Rank}
Consider first the case where $B$ has full rank; i.e., $b_i\not=0$ for $i=1,\ldots,n$.  Let $n_A$ and $n_B$ be the number of distinct eigenvalues of $A$ and $B$, respectively.  Then, there exist indices $s_0,s_1,\ldots,s_{n_A}$ and $t_0,t_1,\ldots,t_{n_B}$ such that $0=s_0 < s_1 < \cdots < s_{n_A} = m$ and $0=t_0 < t_1 < \cdots < t_{n_B} = n$, and
\begin{eqnarray*}
& & a_{s_0+1} = \cdots = a_{s_1} > a_{s_1+1} = \cdots = a_{s_2} > \cdots > a_{s_{n_A-1}+1} = \cdots = a_{s_{n_A}}, \\
& & b_{t_0+1} = \cdots = b_{t_1} > b_{t_1+1} = \cdots = b_{t_2} > \cdots > b_{t_{n_B-1}+1} = \cdots = b_{t_{n_B}}.
\end{eqnarray*}
Let $U_1,\ldots,U_{n_A}$ and $V_1,\ldots,V_{n_B}$ be the eigenspaces of $A$ and $B$, respectively.  Note that $\mbox{dim}(U_i) = s_i-s_{i-1}$ for $i=1,\ldots,n_A$ and $\mbox{dim}(V_j)=t_j-t_{j-1}$ for $j=1,\ldots,n_B$.  Furthermore, let
$$\mathcal{H} = \left\{ (h_1,\ldots,h_{n_A}) \,\left|\, \sum_{i=1}^{n_A} h_i = n, \,\, h_i \in \{0,1,\ldots,s_i-s_{i-1}\} \,\mbox{ for }\, i=1,\ldots,n_A \right. \right\} $$
and $\{e_i\}_{i=1}^m$ be the standard basis of $\R^m$.  Given any $h=(h_1,\ldots,h_{n_A}) \in \mathcal{H}$, define
\begin{align}
E_i(h) &= [ e_{s_{i-1}+1} \, \cdots \, e_{s_{i-1}+h_i} ] \in \R^{m\times h_i} & \mbox{for } i=1,\ldots,n_A, \nonumber \\
E(h) &= [ E_1(h) \, \cdots \, E_{n_A}(h) ] \in \R^{m\times n}. \label{eq:Eh}
\end{align}
We then have the following characterization of the set $\mathcal{X}$ of critical points of Problem~\eqref{realproblem}, whose proof can be found in the appendix:
\begin{prop}\label{the:limit}
The following holds:
\begin{align}
\mathcal{X} &= \left\{ X \in {\rm St}(m,n) \mid X = {\rm BlkDiag}(P_1,\ldots,P_{n_A}) \cdot E(h) \cdot {\rm BlkDiag}(Q_1,\ldots,Q_{n_B}) \right. \nonumber \\
&\quad\quad \left. \mbox{for some }\, P_i \in \mathcal{O}^{s_i-s_{i-1}} \, (i=1,\ldots,n_A), \, Q_j \in \mathcal{O}^{t_j-t_{j-1}} \, (j=1,\ldots,n_B), \mbox{ and } h\in\mathcal{H} \right\}. \label{eq:opt-set-char}
\end{align}
\end{prop}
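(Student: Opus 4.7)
I would prove the two inclusions separately. For $\supseteq$, take $X = PE(h)Q$ as in the statement. Since $A$ acts as a scalar multiple of the identity on each of its eigenblocks, the block-diagonal structure of $P$ gives $AP = PA$, hence $P^TAP = A$; similarly $BQ = QB$. A short calculation then reduces the identity $AXB = XBX^TAX$ to $AE(h)B = E(h)BE(h)^TAE(h)$. Writing $E := E(h)$ and noting that its columns are standard basis vectors (and hence eigenvectors of $A$), one has $AE = ED_a$ for a diagonal matrix $D_a$, and $E^TE = I_n$. The right-hand side then becomes $EBD_a$ while the left-hand side is $ED_aB$, and these coincide because the two diagonal matrices $B$ and $D_a$ commute.

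For the nontrivial direction $\subseteq$, let $X \in \mathcal{X}$. Left-multiplying $AXB - XBX^TAX = \bz$ by $X^T$ and using $X^TX = I_n$ gives $GB = BG$ with $G := X^TAX$. Since $B$ has $n_B$ distinct eigenvalues spread across its blocks, anything commuting with $B$ must be block-diagonal relative to those blocks, yielding $G = \mathrm{BlkDiag}(G_1, \ldots, G_{n_B})$ with $G_j \in \mathcal{S}^{t_j - t_{j-1}}$. Partitioning $X = [X^{(1)} \mid \cdots \mid X^{(n_B)}]$ along the $B$-blocks and using $b_{t_j} \neq 0$ (the full-rank assumption on $B$), the equation $AXB = XBG$ yields $AX^{(j)} = X^{(j)} G_j$ for each $j$. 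Hence the column space of $X^{(j)}$ is an $A$-invariant subspace; diagonalizing $G_j = Q_j D_j Q_j^T$ and setting $Y^{(j)} := X^{(j)} Q_j$ produces orthonormal columns of $A$-eigenvectors in $Y := [Y^{(1)} \mid \cdots \mid Y^{(n_B)}]$.

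Let $h_i$ denote the number of columns of $Y$ lying in the $i$-th $A$-eigenspace $U_i$. The resulting tuple $h = (h_1, \ldots, h_{n_A})$ lies in $\mathcal{H}$. Extending the columns of $Y$ that live in $U_i$ to an orthonormal basis of $U_i$ furnishes matrices $P_i \in \mathcal{O}^{s_i - s_{i-1}}$ such that $\mathrm{BlkDiag}(P_1, \ldots, P_{n_A}) \cdot E(h)$ agrees with $Y$ up to a permutation of columns. Pulling this permutation through to the right and amalgamating it with $\mathrm{BlkDiag}(Q_1^T, \ldots, Q_{n_B}^T)$ would then yield the target factorization $X = \mathrm{BlkDiag}(P_i) \cdot E(h) \cdot \mathrm{BlkDiag}(\tilde Q_j)$.

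The hard part, I expect, is this final column-reordering step. The natural layout of $Y$ is grouped by $B$-blocks (inherited from the partition $Y = [Y^{(1)} \mid \cdots \mid Y^{(n_B)}]$), whereas $E(h)$ groups columns by $A$-eigenspace. Absorbing the required permutation into block-diagonal $Q_j^T$'s must rely on the nonuniqueness of each spectral decomposition $G_j = Q_j D_j Q_j^T$ — specifically the freedom to permute the diagonal entries of $D_j$ and to choose any orthogonal basis within a repeated eigenspace of $G_j$ — so that within every $B$-block the columns of $Y^{(j)}$ falling in a common $U_i$ end up grouped contiguously and in an order compatible with $E(h)$. Carrying out this combinatorial alignment carefully is the delicate step; once it is in place, the rest of the argument reduces to algebraic bookkeeping.
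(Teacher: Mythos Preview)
Your argument tracks the paper's almost verbatim: both left-multiply the critical-point equation by $X^T$ to obtain that $G:=X^TAX$ commutes with $B$, hence is block-diagonal along the $B$-eigenspaces; use invertibility of $B$ to deduce $AX=XG$; and then diagonalize each block of $G$ to produce $Y=X\cdot{\rm BlkDiag}(Q_j^T)$ whose columns are orthonormal $A$-eigenvectors. The paper then finishes by asserting that ``any set of $m$ eigenvectors of $A$ can be expressed as ${\rm BlkDiag}(P_1,\ldots,P_{n_A})$,'' which is precisely the column-reordering step you flag as delicate.

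Your worry is justified, and in fact fatal: the required permutation cannot always be absorbed into a $B$-block-diagonal $Q$, so the proposition as written is false. Take $m=4$, $n=3$, $A={\rm Diag}(a,a,c,c)$ with $a\neq c$, $B={\rm Diag}(b,b,d)$ with $b\neq d$ and $b,d\neq 0$, and $X=[e_1\ e_3\ e_2]$. One checks directly that $AXB=XBX^TAX$, so $X\in\mathcal{X}$. Here $\mathcal{H}=\{(1,2),(2,1)\}$, and for either $h$ the third column of $E(h)$ lies in $U_2={\rm span}\{e_3,e_4\}$; since $Q_2\in\mathcal{O}^1$, the third column of any ${\rm BlkDiag}(P_1,P_2)\cdot E(h)\cdot{\rm BlkDiag}(Q_1,Q_2)$ also lies in $U_2$, whereas the third column of $X$ is $e_2\in U_1$. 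The freedom you invoke --- permuting the eigenvalues of each $G_j$ --- only reorders columns \emph{within} a single $B$-block and cannot move them across blocks, which is what this example requires. What \emph{does} hold (and is what the remark following the proposition actually describes) is that every $X\in\mathcal{X}$ factors as $X=PQ$ with $P\in{\rm St}(m,n)$ having $A$-eigenvector columns and $Q\in\mathcal{O}^n$ block-diagonal along the $B$-eigenspaces; the rigid column layout imposed by the fixed matrix $E(h)$ is the obstruction.
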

{\it Remarks.} (i) Essentially, Proposition~\ref{the:limit} states that every $X\in\mathcal{X}$ can be factorized as $X=PQ$, where $P \in{\rm St}(m,n)$ and $Q\in\mathcal{O}^n$, and the columns of $P$ (resp.~$Q$) are the eigenvectors of $A$ (resp.~$B$).  Indeed, observe that for $i=1,\ldots,n_A$, the $(s_{i-1}+1)$-st to $s_i$-th columns of ${\rm BlkDiag}(P_1,\ldots,P_{n_A})$ form an orthonormal basis of $U_i$.  Similarly, for $j=1,\ldots,n_B$, the $(t_{j-1}+1)$-st to $t_j$-th columns of ${\rm BlkDiag}(Q_1,\ldots,Q_{n_B})$ form an orthonormal basis of $V_j$.  To specify which $n$ of the $m$ eigenvectors of $A$ are chosen to form $P$, we use the matrix $E(h)$, where $h=(h_1,\ldots,h_{n_A}) \in \mathcal{H}$ and $h_i$ is the number of eigenvectors chosen from the eigenspace $U_i$.

(ii) A result similar to Proposition~\ref{the:limit} has appeared in~\cite[Section 4.8.2]{AMS08}.  However, the proof therein contains a small gap.  Specifically, from the properties that $B$ is diagonal and commutes with $X^TAX$, it is claimed in~\cite[Section 4.8.2]{AMS08} that $X^TAX$ is also diagonal.  However, this is not true unless the diagonal entries of $B$ are all distinct.

Proposition~\ref{the:limit} suggests that we can partition $\mathcal{X}$ into disjoint subsets $\{\mathcal{X}_h\}_{h\in\mathcal{H}}$, where 
\begin{align*}
\mathcal{X}_h &= \left\{ X \in {\rm St}(m,n) \mid X = {\rm BlkDiag}(P_1,\ldots,P_{n_A})  \cdot E(h) \cdot {\rm BlkDiag}(Q_1,\ldots,Q_{n_B}) \right. \nonumber \\
&\quad\quad \left. \mbox{for some }\, P_i \in \mathcal{O}^{s_i-s_{i-1}} \, (i=1,\ldots,n_A), \, Q_j \in \mathcal{O}^{t_j-t_{j-1}} \, (j=1,\ldots,n_B) \right\}.
\end{align*}
Consequently, in order to prove Theorem~\ref{theorem:A5}', it suffices to bound ${\rm dist}(X,\mathcal{X}_h)$ for any $X\in{\rm St}(m,n)$ and $h\in\mathcal{H}$.

\subsection{Estimating the Distance to the Set of Critical Points}
Let $X \in {\rm St}(m,n)$ and $h=(h_1,\ldots,h_{n_A})\in\mathcal{H}$ be arbitrary.  By definition, 
\begin{align}
{\rm dist}(X,\mathcal{X}_h) &= \min\left\{ \left\| X - {\rm BlkDiag}\left( P_1,\ldots,P_{n_A} \right) \cdot E(h) \cdot {\rm BlkDiag}\left( Q_1,\ldots,Q_{n_B} \right) \right\|_F \mid \right. \nonumber \\
&\qquad\quad\,\,\, \left. P_i \in \mathcal{O}^{s_i-s_{i-1}} \mbox{ for } i=1,\ldots,n_A; \, Q_j \in \mathcal{O}^{t_j-t_{j-1}} \mbox{ for } j=1,\ldots,n_B \right\}. \label{dist}
\end{align}
Let $\left( P_1^*,\ldots,P_{n_A}^*,Q_1^*,\ldots,Q_{n_B}^* \right)$ be an optimal solution to~\eqref{dist}.  Upon letting 
$$ P^* = {\rm BlkDiag}\left( P_1^*,\ldots,P_{n_A}^* \right) \in \mathcal{O}^m, \quad Q^* = {\rm BlkDiag}\left( Q_1^*,\ldots,Q_{n_B}^* \right) \in \mathcal{O}^n, $$
and $\bar{X} = (P^*)^TX(Q^*)^T$, it is clear that ${\rm dist}^2(X,\mathcal{X}_h) = \left\| \bar{X} - E(h) \right\|_F^2$.  To bound this quantity, consider the decompositions 
\begin{equation} \label{eq:B-decomp}
\bar{X} = \left[ \bar{X}_1 \, \cdots \, \bar{X}_{n_B} \right] \quad\mbox{and}\quad E(h) = \left[ \bar{E}_1(h) \,\cdots\, \bar{E}_{n_B}(h) \right],
\end{equation}
where $\bar{X}_j,\bar{E}_j(h) \in \R^{m\times (t_j-t_{j-1})}$.  We then have the following result, whose proof can be found in the appendix:
\begin{prop} \label{prop:sq-dist-bd}
For $j=1,\ldots,n_B$ and $k=1,\ldots,m$, denote the $k$-th row of $\bar{X}_j$ and $\bar{E}_j(h)$ by $\left[ \bar{X}_j \right]_k$ and $\left[ \bar{E}_j(h) \right]_k$, respectively.  Suppose that ${\rm dist}(X,\mathcal{X}_h) \le \delta$ for some $\delta \in (0,1)$.  Then, 
$$ {\rm dist}^2(X,\mathcal{X}_h) = \sum_{j=1}^{n_B} \sum_{k\in\mathcal{I}_j} \Theta\left( \left\| \left[\bar{X}_j\right]_k \right\|_2^2 \right), $$
where $\mathcal{I}_j = \left\{ k \in \{1,\ldots,m\}: \left[ \bar{E}_j(h) \right]_k = \bz \right\}$. 
\end{prop}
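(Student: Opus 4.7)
The plan is to establish matching bounds $c_1 T \le {\rm dist}^2(X, \mathcal{X}_h) \le c_2 T$ with $T := \sum_{j, k \in \mathcal{I}_j} \|[\bar{X}_j]_k\|_2^2$. The lower bound is immediate: expanding $\|\bar{X} - E(h)\|_F^2 = \sum_j \sum_k \|[\bar{X}_j]_k - [\bar{E}_j(h)]_k\|_2^2$ and restricting to the indices $k \in \mathcal{I}_j$ (for which $[\bar{E}_j(h)]_k = \bz$) drops only non-negative terms, so ${\rm dist}^2(X,\mathcal{X}_h) \ge T$.

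For the upper bound I would first rewrite both quantities using the Stiefel identity $\bar{X}^T \bar{X} = E(h)^T E(h) = I_n$. Let $\mathcal{K} \subset \{1,\ldots,m\}$ denote the set of non-zero row indices of $E(h)$ and $\pi: \mathcal{K} \to \{1, \ldots, n\}$ the bijection with $[E(h)]_k = e_{\pi(k)}^T$ for $k \in \mathcal{K}$; let $j_k$ be the $B$-block containing $\pi(k)$. A short calculation yields $\|\bar{X} - E(h)\|_F^2 = 2\sum_{k \in \mathcal{K}}(1 - \alpha_k)$ and $T = \sum_{k \in \mathcal{K}}(1 - \beta_k)$, where $\alpha_k := \bar{X}_{k, \pi(k)}$ and $\beta_k := \|[\bar{X}_{j_k}]_k\|_2^2$. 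The claim thus reduces to showing $\sum_{k \in \mathcal{K}}(1 - \alpha_k) \le \sum_{k \in \mathcal{K}}(1 - \beta_k)$.

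The key step, and the one I expect to be the main obstacle, is to extract a positive semidefiniteness structure from the global optimality of $(P^*, Q^*)$ in~\eqref{dist}. Fixing $P = P^*$ and reparameterizing $\tilde{Q} = Q (Q^*)^T$, the sub-problem in $Q$ becomes $\min_{\tilde{Q}} \|\bar{X} - E(h) \tilde{Q}\|_F^2$, attained at $\tilde{Q}^* = I_n$. Since $\tilde{Q}$ is block-diagonal with blocks in $\mathcal{O}^{d_j}$ where $d_j := t_j - t_{j-1}$, this decouples into $n_B$ Procrustes-type sub-problems $\max_{\tilde{Q}_j \in \mathcal{O}^{d_j}} {\rm tr}(\tilde{Q}_j [\bar{X}^T E(h)]_{jj})$, each attained at $\tilde{Q}_j = I_{d_j}$. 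Using the identity $\max_{Q \in \mathcal{O}^d} {\rm tr}(QM) = \|M\|_*$, the attainment at $I_{d_j}$ forces $[\bar{X}^T E(h)]_{jj}$ to be symmetric positive semidefinite. Direct inspection identifies $[\bar{X}^T E(h)]_{jj}$ (up to transpose) with the $d_j \times d_j$ sub-matrix $Z^{(j)}$ of $\bar{X}_j$ whose rows are indexed by $\mathcal{K}_j^B := \{k \in \mathcal{K} : j_k = j\}$. Because $\bar{X}_j$ has orthonormal columns, $\|Z^{(j)}\|_2 \le 1$, and hence $Z^{(j)}$ has eigenvalues in $[0, 1]$.

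Finally, letting $\lambda^{(j)}_1, \ldots, \lambda^{(j)}_{d_j} \in [0, 1]$ denote the eigenvalues of $Z^{(j)}$, the identifications ${\rm tr}(Z^{(j)}) = \sum_{k \in \mathcal{K}_j^B} \alpha_k$ and $\|Z^{(j)}\|_F^2 = \sum_{k \in \mathcal{K}_j^B} \beta_k$ turn the per-block version of the desired inequality into $\sum_i (1 - \lambda^{(j)}_i) \le \sum_i (1 - (\lambda^{(j)}_i)^2)$, which holds term-by-term since $\lambda^2 \le \lambda$ on $[0, 1]$. Summing over $j$ gives $\sum_{k \in \mathcal{K}} (1 - \alpha_k) \le T$, hence ${\rm dist}^2(X, \mathcal{X}_h) \le 2T$; combined with the trivial lower bound, this establishes ${\rm dist}^2(X, \mathcal{X}_h) = \Theta\bigl(\sum_{j, k \in \mathcal{I}_j} \|[\bar{X}_j]_k\|_2^2\bigr)$.
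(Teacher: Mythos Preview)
Your argument is correct. The overall architecture is the same as the paper's: both decouple~\eqref{dist} over the $B$-blocks by re-optimizing in $Q$ with $P=P^*$ fixed, reducing to $n_B$ independent orthogonal Procrustes problems whose optima are attained at the identity. The difference lies in how the Procrustes step is handled. The paper (Lemma~\ref{theorem_left}) computes the Procrustes optimum explicitly via SVD, obtaining $v^*=\|\Sigma-I_q\|_F^2+\|S_2\|_F^2$, and then uses the identity $\Sigma^2+V^TS_2^TS_2V=I_q$ to bound $\|\Sigma-I_q\|_F$ by $\|S_2\|_F^2$; this is where the hypothesis ${\rm dist}(X,\mathcal{X}_h)<1$ enters, to absorb the resulting $\|S_2\|_F^4$ term. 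You instead exploit that attainment of $\max_{Q\in\mathcal{O}^{d_j}}{\rm tr}(QM)$ at $Q=I$ forces $M=Z^{(j)}$ to be symmetric positive semidefinite with spectrum in $[0,1]$, and then compare $\sum_i(1-\lambda_i)$ directly with $\sum_i(1-\lambda_i^2)$ via $\lambda^2\le\lambda$ on $[0,1]$. Both routes yield the same constants $T\le{\rm dist}^2(X,\mathcal{X}_h)\le 2T$; your spectral argument is a bit more streamlined and, as a bonus, does not need the assumption $\delta<1$ at all.
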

To establish the desired error bound, we need to link $\left\| AXB - XBX^TAX \right\|_F$ to the bound on ${\rm dist}^2(X,\mathcal{X}_h)$ in Proposition~\ref{prop:sq-dist-bd}.  This is achieved in two steps.  First, we prove the following result:
\begin{prop} \label{prop:grad-prelim-bd}
Consider the decomposition of $\bar{X}$ in~\eqref{eq:B-decomp}.  Then, 
$$ \left\| AXB - XBX^TAX \right\|_F^2 = \Omega\left( \sum_{j=1}^{n_B} \left\| A\bar{X}_j - \bar{X}_j\bar{X}_j^TA\bar{X}_j \right\|_F^2 \right). $$
\end{prop}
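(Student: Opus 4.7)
The plan is to exploit the block-diagonal structure of $P^*$ and $Q^*$ (which matches the eigenspace structure of $A$ and $B$ respectively) to reduce the claim to an inequality about $\bar X$, and then to decompose the residual block-column by block-column using the orthonormality relations among the $\bar X_j$'s.

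First, I would observe that since $P_i^* \in \mathcal{O}^{s_i-s_{i-1}}$ matches the block where $A$ acts as a scalar multiple of the identity, one has $(P^*)^T A P^* = A$, equivalently $AP^* = P^*A$; similarly $BQ^* = Q^*B$. Substituting $X = P^*\bar X Q^*$ into both $AXB$ and $XBX^TAX$ and pulling $P^*$ to the left and $Q^*$ to the right (using the commutation relations together with $(Q^*)^TQ^* = I$, $(P^*)^TP^* = I$) gives
$$AXB - XBX^TAX \;=\; P^*\bigl(A\bar X B - \bar X B\bar X^T A\bar X\bigr)Q^*,$$
so by orthogonal invariance of the Frobenius norm, it suffices to bound $\|A\bar XB - \bar XB\bar X^TA\bar X\|_F^2$ from below.

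Next I would introduce $\beta_j$ as the common eigenvalue of $B$ on the $j$-th block (so $b_{t_{j-1}+1}=\cdots=b_{t_j}=\beta_j$, with the $\beta_j$ all \emph{distinct} and \emph{nonzero} since $B$ has full rank). In the block decomposition~\eqref{eq:B-decomp}, the $j$-th block column of $A\bar X B - \bar XB\bar X^TA\bar X$ equals
$$R_j \;=\; \beta_j A\bar X_j \;-\; \sum_{k=1}^{n_B} \beta_k\, \bar X_k \bar X_k^T A\bar X_j.$$
Using the identity $\bar X^T\bar X = I_n$, which forces $\bar X_k^T\bar X_l = \delta_{kl}\,I$, I would decompose $R_j$ through the orthogonal splitting $I_m = (I_m - \bar X\bar X^T) + \sum_k \bar X_k\bar X_k^T$, which yields
$$R_j \;=\; \beta_j(I_m - \bar X\bar X^T)A\bar X_j \;+\; \sum_{k\neq j}(\beta_j - \beta_k)\,\bar X_k\bar X_k^T A\bar X_j,$$
because the $k=j$ term drops out. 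The summands are pairwise Frobenius-orthogonal (they lie in the mutually orthogonal subspaces $\operatorname{range}(\bar X_k)$ and $\operatorname{range}(\bar X)^\perp$), so by Pythagoras
$$\|R_j\|_F^2 \;=\; \beta_j^2\,\|(I_m-\bar X\bar X^T)A\bar X_j\|_F^2 \;+\; \sum_{k\neq j}(\beta_j-\beta_k)^2\,\|\bar X_k^T A\bar X_j\|_F^2.$$

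Finally, the same orthogonal splitting applied to $A\bar X_j - \bar X_j\bar X_j^T A\bar X_j = (I_m - \bar X_j\bar X_j^T)A\bar X_j$ gives
$$\|A\bar X_j - \bar X_j\bar X_j^T A\bar X_j\|_F^2 \;=\; \|(I_m-\bar X\bar X^T)A\bar X_j\|_F^2 \;+\; \sum_{k\neq j}\|\bar X_k^T A\bar X_j\|_F^2.$$
Setting $c := \min\bigl(\min_j \beta_j^2,\ \min_{j\neq k}(\beta_j-\beta_k)^2\bigr) > 0$ — this is where the full-rank assumption on $B$ and the distinctness of the $\beta_j$'s are essential — a termwise comparison yields $\|R_j\|_F^2 \ge c\,\|A\bar X_j - \bar X_j\bar X_j^T A\bar X_j\|_F^2$, and summing over $j$ gives the claimed $\Omega$-bound. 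No step is especially deep; the main bookkeeping obstacle is keeping track of the three different orthogonalities (of $P^*,Q^*$, of the block-columns of $\bar X$, and of the images of the projections), but once the decomposition of $R_j$ is written down cleanly the inequality falls out of Pythagoras.
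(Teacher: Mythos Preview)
Your proof is correct and follows essentially the same route as the paper's: both reduce to $\bar X$ via the commutation relations $AP^*=P^*A$, $BQ^*=Q^*B$, decompose each block column $R_j$ orthogonally using the splitting $I_m=(I_m-\bar X\bar X^T)+\sum_k\bar X_k\bar X_k^T$, apply Pythagoras, and then compare coefficients termwise using $\min_j b_{t_j}^2>0$ and $\min_{j\neq k}(b_{t_j}-b_{t_k})^2>0$. The only cosmetic difference is that the paper names the orthogonal-complement piece $T_j$ rather than writing it as $(I_m-\bar X\bar X^T)A\bar X_j$.
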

In view of Proposition~\ref{prop:grad-prelim-bd}, we then proceed to prove the following bound:
\begin{prop} \label{prop:grad-term-bd}
Suppose that ${\rm dist}(X,\mathcal{X}_h) \le \delta$ for some $\delta \in (0,1)$.  Then, 
$$ \sum_{j=1}^{n_B} \left\| A\bar{X}_j - \bar{X}_j\bar{X}_j^TA\bar{X}_j \right\|_F^2 = \sum_{j=1}^{n_B}\sum_{k\in\mathcal{I}_j} \Omega\left( \left\|  \left[\bar{X}_j\right]_k \right\|_2^2 \right). $$
\end{prop}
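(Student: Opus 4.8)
\medskip
\noindent\textbf{Proof plan.}\ Write $\delta' := {\rm dist}(X,\mathcal X_h) \le \delta$, $\Delta_j := \bar X_j - \bar E_j(h)$ (so that $\sum_j \|\Delta_j\|_F^2 = \delta'^2$), and $G_j := A\bar X_j - \bar X_j\bar X_j^T A\bar X_j$; all hidden constants below depend only on $A$, $B$ and the dimensions. The plan is to estimate each $G_j$ by a first-order perturbation analysis about $\bar E_j(h)$, the delicate point being a family of ``flat'' entries whose first-order contribution to $G_j$ vanishes and which must instead be controlled through the optimality of $(P^*,Q^*)$. Since $A$ is diagonal and the columns of $\bar E_j(h)$ are distinct standard basis vectors, $\bar E_j(h)$ spans an $A$-invariant subspace, so $\big( I_m - \bar E_j(h)\bar E_j(h)^T \big)A\bar E_j(h) = \bz$ and $M_j := \bar E_j(h)^T A\bar E_j(h)$ is diagonal, with $c$-th diagonal entry $a_\ell$ whenever $e_\ell$ is the $c$-th column of $\bar E_j(h)$. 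Expanding $G_j$ in $\Delta_j$ and using $\bar X_j^T\bar X_j = I$ to absorb the quadratic remainders, one finds that for every $k \in \mathcal I_j$,
$$ [G_j]_k = [\bar X_j]_k\big( a_k I - M_j \big) + O(\delta'^2), $$
because $[\bar E_j(h)]_k = \bz$ forces $[\Delta_j]_k = [\bar X_j]_k$ on these rows and annihilates the cross terms; and since $a_k I - M_j$ is diagonal, the $c$-entry of $[G_j]_k$ equals $[\bar X_j]_{k,c}\big( a_k - (M_j)_{cc} \big) + O(\delta'^2)$.

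Next I would split the off-support entries. Call a pair $(k,c)$ with $k \in \mathcal I_j$ and $c$ a column of block $j$ \emph{regular} if $a_k \ne (M_j)_{cc}$ and \emph{singular} otherwise, and let $M_{\rm reg}$ and $M_{\rm sng}$ be the sums of $[\bar X_j]_{k,c}^2$ over all regular, resp.\ singular, pairs, summed over $j$; thus $M_{\rm reg} + M_{\rm sng} = \sum_j \sum_{k\in\mathcal I_j} \|[\bar X_j]_k\|_2^2$ is precisely the right-hand side to be bounded. For a regular pair, $|a_k - (M_j)_{cc}| \ge g$, where $g > 0$ is the smallest gap between distinct eigenvalues of $A$; feeding this into the expansion above together with $(x+y)^2 \ge \tfrac12 x^2 - 2y^2$ gives
$$ \sum_j \|G_j\|_F^2 \;\ge\; \sum_j \sum_{k\in\mathcal I_j} \|[G_j]_k\|_2^2 \;\ge\; \tfrac{g^2}{2}\, M_{\rm reg} - O(\delta'^4). $$

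The crux is to show $M_{\rm sng} = O(\delta'^4)$, i.e.\ that the singular entries are quadratically (not merely linearly) small in $\delta'$. For this I would exploit the optimality of $(P^*,Q^*)$ in~\eqref{dist}. Fix an eigenspace $U_i$ of $A$, and let $\bar X_{U_i,\cdot}$, $E(h)_{U_i,\cdot}$, $X_{U_i,\cdot}$ denote the row-submatrices indexed by $U_i$. Since $P^* = {\rm BlkDiag}(P_1^*,\ldots,P_{n_A}^*)$ is optimal, $P_i^*$ must solve the orthogonal Procrustes problem $\min_{P_i}\big\| P_i^T X_{U_i,\cdot}(Q^*)^T - E(h)_{U_i,\cdot} \big\|_F^2$, whose first-order optimality condition is that $\bar X_{U_i,\cdot}\, E(h)_{U_i,\cdot}^T$ be symmetric. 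Because $E(h)_{U_i,\cdot}$ is a partial permutation supported on the selected rows of $U_i$ and the columns of $E(h)$ that select from $U_i$, this unpacks into: (a) $\bar X$ vanishes at every entry lying in a non-selected row of $U_i$ and a selecting column of $U_i$; and (b) the square submatrix $S^{(i)}$ of $\bar X$ whose rows are the selected rows of $U_i$ and whose columns are the selecting columns of $U_i$ is symmetric, and moreover $S^{(i)} = I + O(\delta')$. Now a singular pair $(k,c)$ is one where $e_k$ and the eigenvector selected by $c$ lie in a common eigenspace $U_i$; if $e_k$ is not selected then $\bar X_{k,c} = 0$ by (a), while if $e_k$ is selected --- necessarily by some column $c' \ne c$ in another $B$-block --- then $\bar X_{k,c}$ is an off-diagonal entry of $S^{(i)}$. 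In the latter case, in the orthogonality relation $\bar X_{\cdot,c}^T\bar X_{\cdot,c'} = 0$ the two dominant summands are each (a diagonal entry of $S^{(i)}$) times (the off-diagonal entry $\bar X_{k,c}$, using the symmetry in (b)), while all remaining summands are $O(\delta'^2)$; since the diagonal entries of $S^{(i)}$ are $1 + O(\delta')$, this forces $\bar X_{k,c} = O(\delta'^2)$. As there are only $O(1)$ singular pairs, $M_{\rm sng} = O(\delta'^4)$.

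To conclude, Proposition~\ref{prop:sq-dist-bd} gives $M_{\rm reg} + M_{\rm sng} = \sum_j \sum_{k\in\mathcal I_j} \|[\bar X_j]_k\|_2^2 = \Theta(\delta'^2)$, so $M_{\rm sng} = O(\delta'^4) = o\big( M_{\rm reg} + M_{\rm sng} \big)$ and hence $M_{\rm reg} = \Theta\big( M_{\rm reg} + M_{\rm sng} \big)$ once $\delta$ is small enough; substituting into the regular bound and absorbing the remainder via $O(\delta'^4) = O\big( \delta^2 (M_{\rm reg} + M_{\rm sng}) \big)$ yields $\sum_j \|G_j\|_F^2 = \Omega\big( \sum_j \sum_{k\in\mathcal I_j} \|[\bar X_j]_k\|_2^2 \big)$, which is the assertion. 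I expect the singular-mass estimate of the third paragraph to be the main obstacle: there the eigenvalue-gap argument is powerless, and one genuinely has to combine the Procrustes optimality of $P^*$ with the Stiefel orthogonality $\bar X^T\bar X = I_n$ to upgrade those entries from size $O(\delta')$ to size $O(\delta'^2)$.
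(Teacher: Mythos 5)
Your proposal is correct, and its skeleton coincides with the paper's: a first-order expansion of $A\bar{X}_j-\bar{X}_j\bar{X}_j^TA\bar{X}_j$ about $E(h)$, an eigenvalue-gap bound for the off-support coordinates lying in $A$-eigenspaces different from the one selected by the column in question (your ``regular'' pairs), and---the crux---a proof that the same-eigenspace (``singular'') entries are quadratically small in ${\rm dist}(X,\mathcal{X}_h)$, obtained by combining the optimality of $P^*$ with Stiefel orthogonality, after which Proposition~\ref{prop:sq-dist-bd} lets you absorb the remainders. Where you genuinely diverge is in how that crucial quadratic bound is implemented. The paper's Lemma~\ref{prop:subp-opt} treats each $A$-eigenspace block at once: using the closed-form (SVD) solution of the per-block Procrustes problem together with $Y_{ii}^TY_{ii}+\sum_{j\neq i}Y_{ji}^TY_{ji}=I_{h_i}$, it shows the per-block Procrustes value is $\Theta\bigl(\bigl\|\sum_{j\neq i}Y_{ji}^TY_{ji}\bigr\|_F^2\bigr)=O(\delta^4)$ and then observes that all flat entries sit inside $\bar{X}_{ii}$ off its leading identity part. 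You instead invoke the stationarity condition of the same Procrustes subproblem (symmetry of $\bar{X}_{U_i,\cdot}E(h)_{U_i,\cdot}^T$), which yields exact vanishing of the unselected-row entries and a symmetry relation among the selected-row entries, and then use pairwise column orthogonality $\bar{X}_{\cdot,c}^T\bar{X}_{\cdot,c'}=0$ to push the latter down to $O({\rm dist}^2)$; I checked both deductions and they are sound (in particular the two dominant terms in the orthogonality relation add rather than cancel because the diagonal entries are $1+O(\delta')$, and the separability of the joint problem over the row blocks does make each $P_i^*$ optimal for its subproblem given $Q^*$). Your entrywise route is more hands-on and gives slightly more (exact zeros in the unselected rows), while the paper's SVD argument is shorter and bounds the entire off-identity mass of $\bar{X}_{ii}$ in one stroke; you are also cleaner than the paper in measuring all errors against $\delta'={\rm dist}(X,\mathcal{X}_h)$ rather than the fixed radius $\delta$, which makes the final absorption step $O(\delta'^4)=O(\delta^2)\cdot\Theta(M_{\rm reg}+M_{\rm sng})$ painless.
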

The proofs of Propositions~\ref{prop:grad-prelim-bd} and~\ref{prop:grad-term-bd} can be found in the appendix.  Now, observe that whenever $X\in{\rm St}(m,n)$ and ${\rm dist}(X,\mathcal{X})\le\delta$, then there exists an $h\in\mathcal{H}$ such that ${\rm dist}(X,\mathcal{X}_h)\le\delta$.  Hence, by combining Propositions~\ref{prop:sq-dist-bd},~\ref{prop:grad-prelim-bd}, and~\ref{prop:grad-term-bd}, we obtain Theorem~\ref{theorem:A5}'.

\subsection{Removing the Full Rank Assumption on $B$}
Consider now the case where $B$ does not have full rank.  Without loss of generality, we assume that $B = {\rm BlkDiag}(\bar{B},\bz)$, where $\bar{B}={\rm Diag}(b_1,\ldots,b_p)\in\mathcal{S}^p$ has full rank.  Then, using~\eqref{eq:opt-set-sim}, it can be shown that
$$ \mathcal{X} = \left\{ X = [X_1\,\,X_2] \in {\rm St}(m,n) \mid X_1\in\R^{m\times p}, X_2\in\R^{m\times(n-p)}, AX_1\bar{B} - X_1\bar{B}X_1^TAX_1 = \bz \right\}. $$
It follows that for any $X = [X_1\,\,X_2] \in{\rm St}(m,n)$ with $ X_1\in\R^{m\times p}$ and $X_2\in\R^{m\times(n-p)}$, we have ${\rm dist}(X,\mathcal{X}) = {\rm dist}(X_1,\bar{\mathcal{X}})$, where
$$ \bar{\mathcal{X}} = \left\{ X \in {\rm St}(m,p) \mid AX\bar{B} - X\bar{B}X^TAX = \bz \right\}. $$
By our previous result, there exist constants $\delta,\eta>0$ such that 
$$ {\rm dist}(X_1,\bar{\mathcal{X}}) \le \eta\left\| AX\bar{B} - X\bar{B}X^TAX \right\|_F $$
whenever $X_1\in{\rm St}(m,p)$ and ${\rm dist}(X_1,\bar{\mathcal{X}}) \le \delta$.  To complete the proof, it remains to observe that
\begin{align*}
\left\| AXB - XBX^TAX \right\|_F^2 &= \left\| AX_1\bar{B} - X_1\bar{B}X_1^TAX_1 \right\|_F^2 + \left\|X_1\bar{B}X_1^TAX_2\right\|_F^2 \\
&= \left\| AX_1\bar{B} - X_1\bar{B}X_1^TAX_1 \right\|_F^2 + \left\| X_2^T\left( AX_1\bar{B} - X_1\bar{B}X_1^TAX_1 \right)X_1^T \right\|_F^2 \\
&= \Theta\left( \left\| AX_1\bar{B} - X_1\bar{B}X_1^TAX_1 \right\|_F^2 \right).
\end{align*}

\section{Numerical Experiments} \label{sec:exp}
In this section, we perform numerical experiments to investigate the convergence rate of the retracted line-search algorithm for problem~\eqref{realproblem} on synthetic datasets. As we shall see, the results consistency with the theoretical analysis in previous sections. In particular, we consider the four retractions mentioned above.

First, we generate our diagonal matrices $A\in \mathcal{S}^m$ and $B\in \mathcal{S}^n$, whose diagonal elements are sampled randomly from the uniform distribution. The starting point $X_0$ is chosen from the uniform distribution and get the orthonormal basis for the range of $X_0$ to keep the feasibility. In the setting of Armijo point, we fix $\gamma=1, \beta=0.5$ and  $c=0.001$. We stop the algorithm when $F(X_k)-F(X_{k+1})<10^{-8}$.

In practical computations, the orthogonality constraint may be violated after several iterations, which is mainly due to numerical errors incurred in the multiplication. In the numerical experiments, we follow the technique introduced in~\cite{JD14} and use $(X^TX)^{-1}$ to control feasibility error.

Figure~\eqref{fig:thin} illustrates the convergence performance of the four retractions with the relative ``Thin'' matrix: \eqref{fig:1} $m=20, n=10$, \eqref{fig:2} $m=30, n=10$, \eqref{fig:3} $m=100, n=10$. Figure~\eqref{fig:fat} illustrates the convergence performance with the relative ``Fat'' matrix: \eqref{fig:4} $m=20, n=15$, \eqref{fig:5} $m=50, n=40$, \eqref{fig:6} $m=100, n=80$. It can be seen that as long as the iterates are close enough to the optimal set, both the objective values and the solutions converge linearly.
\begin{figure}[h]
\centering
\subfigure[]{
\includegraphics[width=0.3\textwidth]{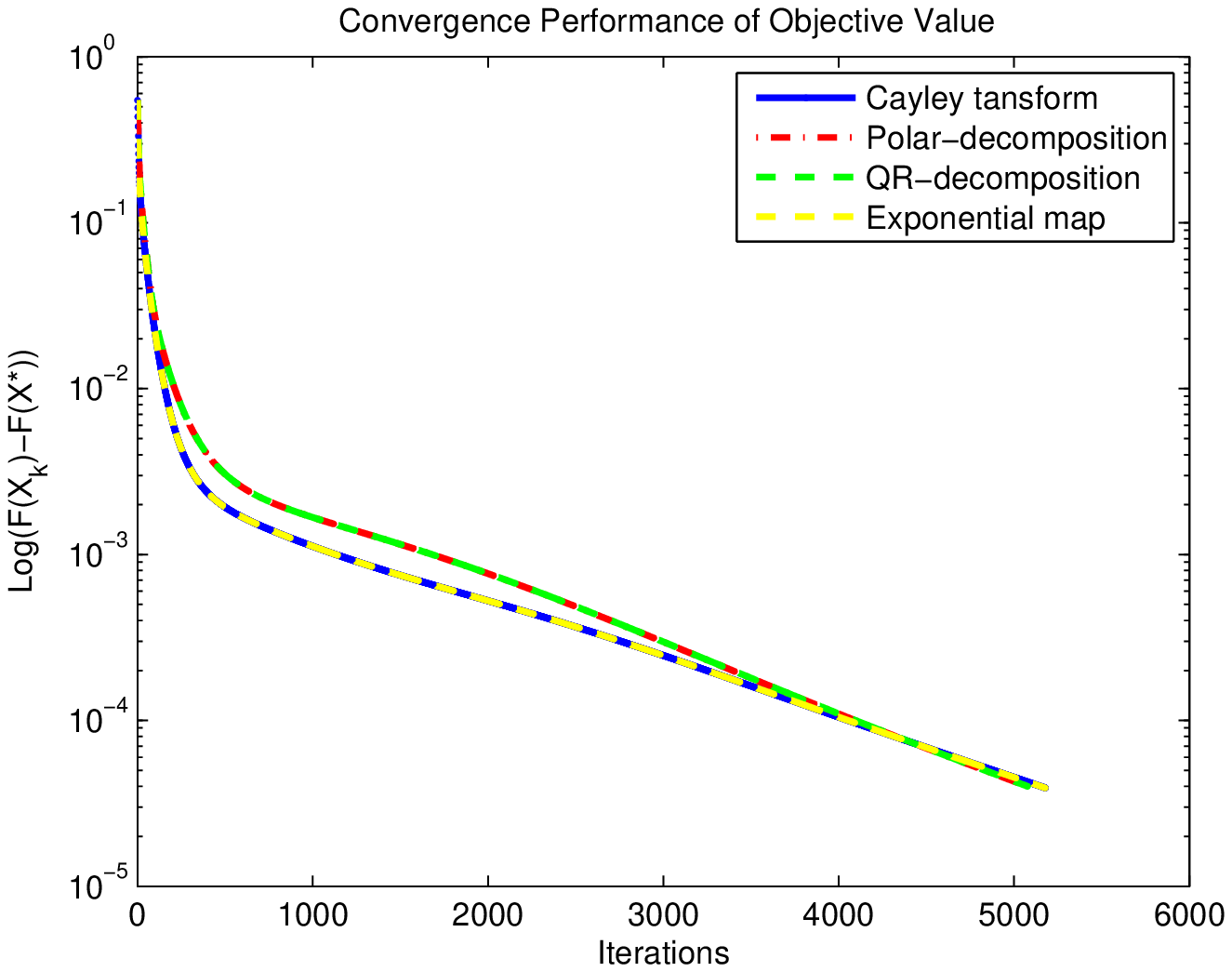}
\label{fig:1}
}
\subfigure[]{
\includegraphics[width=0.3\textwidth]{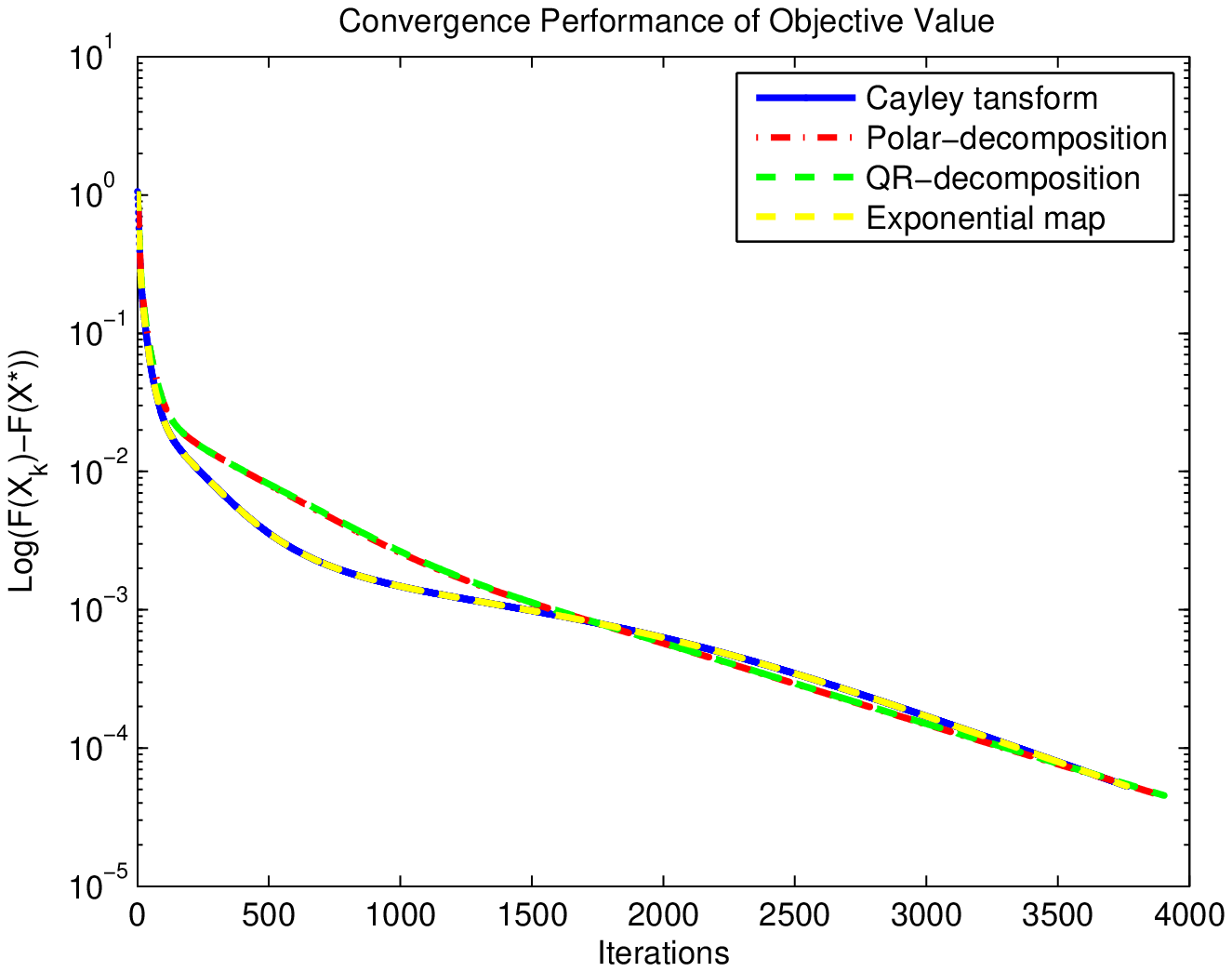}
\label{fig:2}
}
\subfigure[]{
\includegraphics[width=0.3\textwidth]{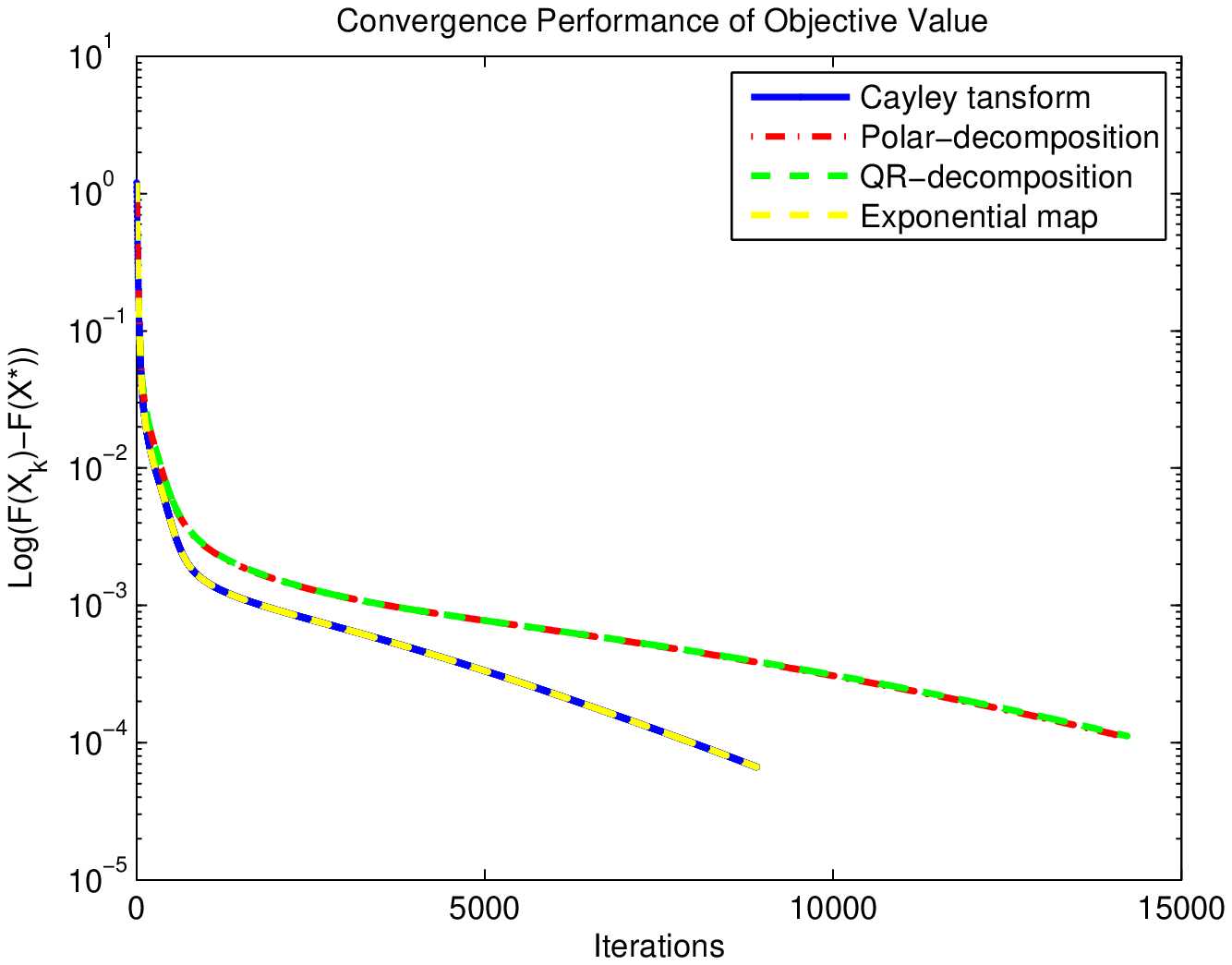}
\label{fig:3}
}
\caption{The Relative ``Thin'' Matrix.}
\label{fig:thin}
\end{figure}

\begin{figure}[h]
\centering
\subfigure[]{
\includegraphics[width=0.3\textwidth]{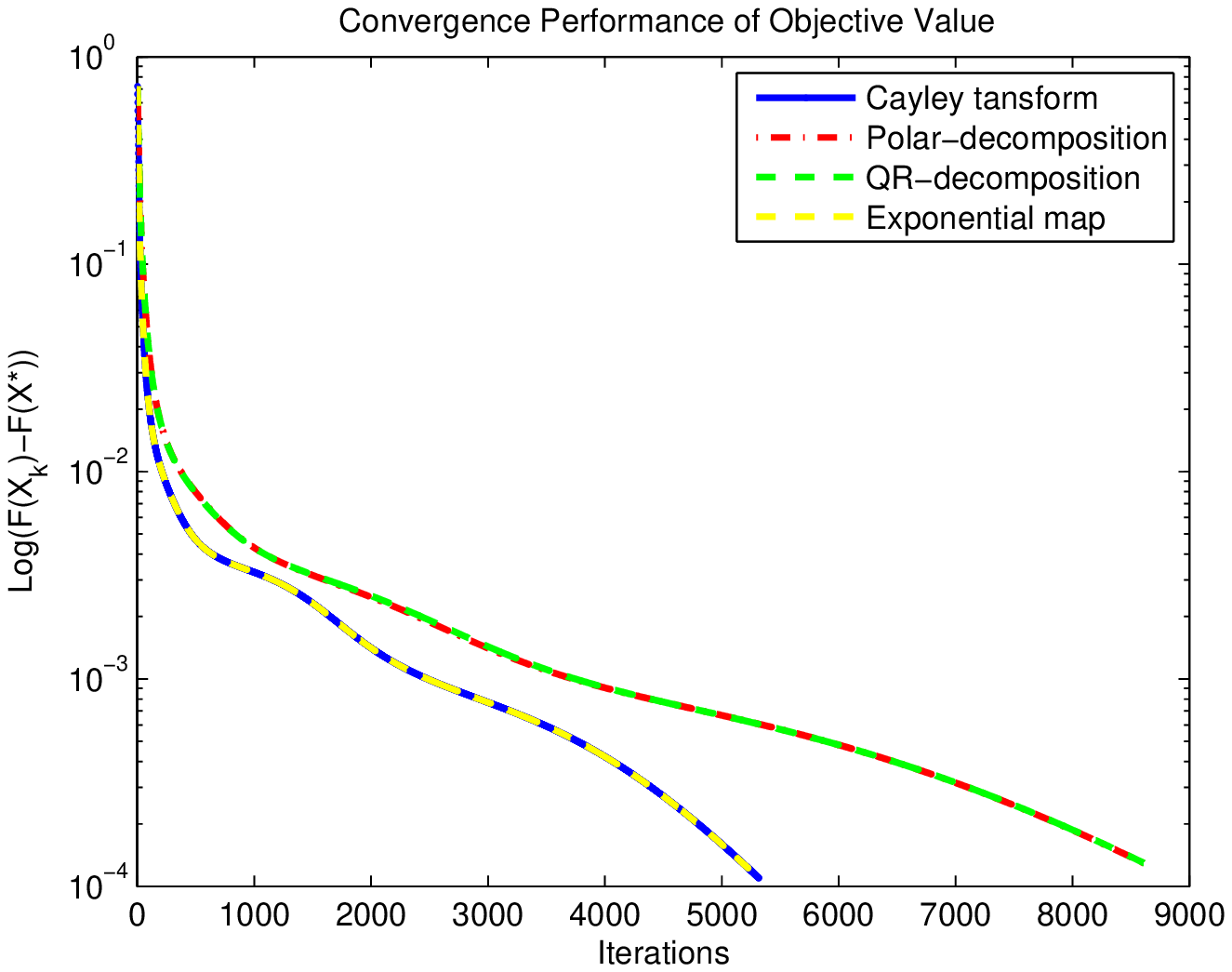}
\label{fig:4}
}
\subfigure[]{
\includegraphics[width=0.3\textwidth]{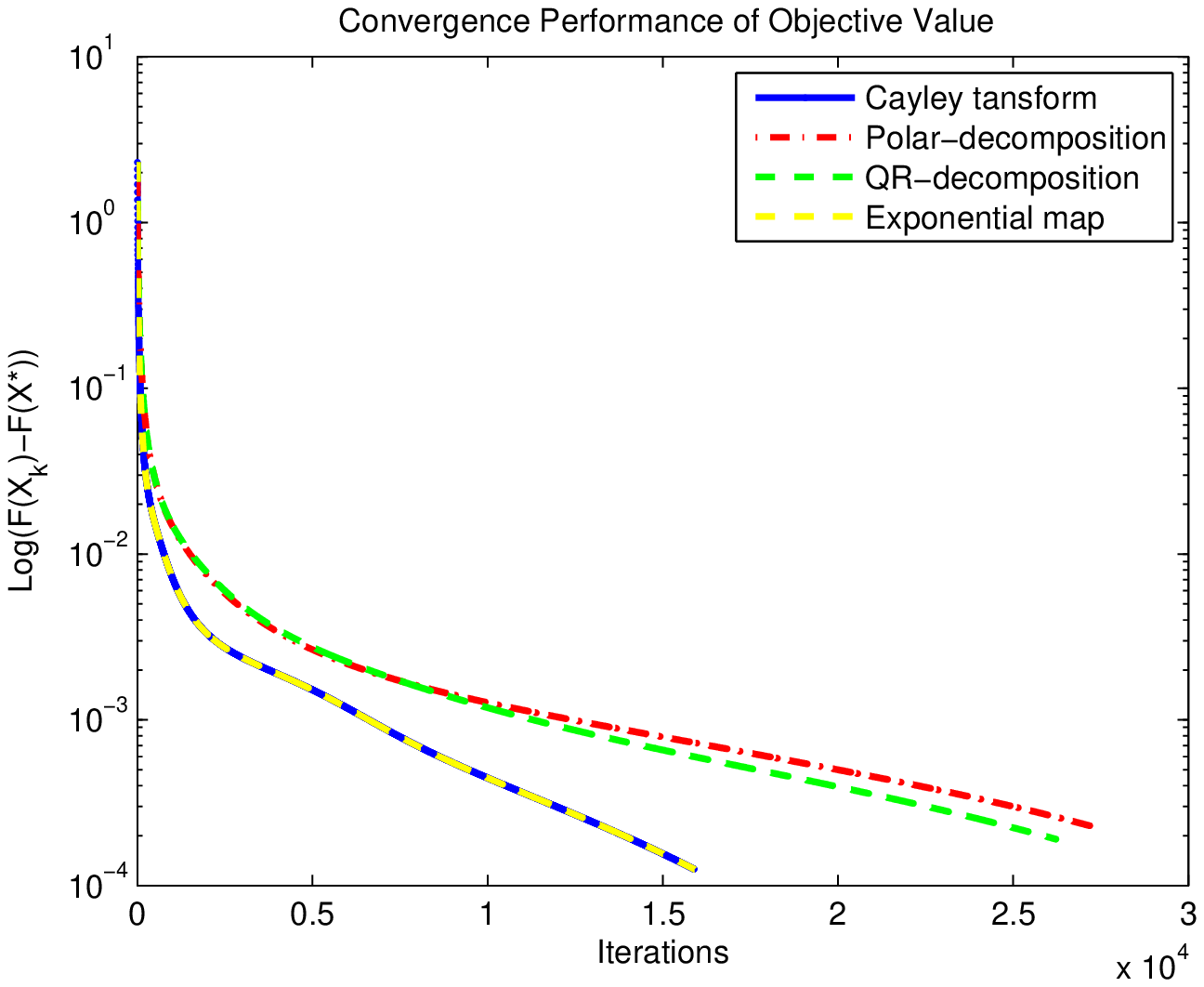}
\label{fig:5}
}
\subfigure[]{
\includegraphics[width=0.3\textwidth]{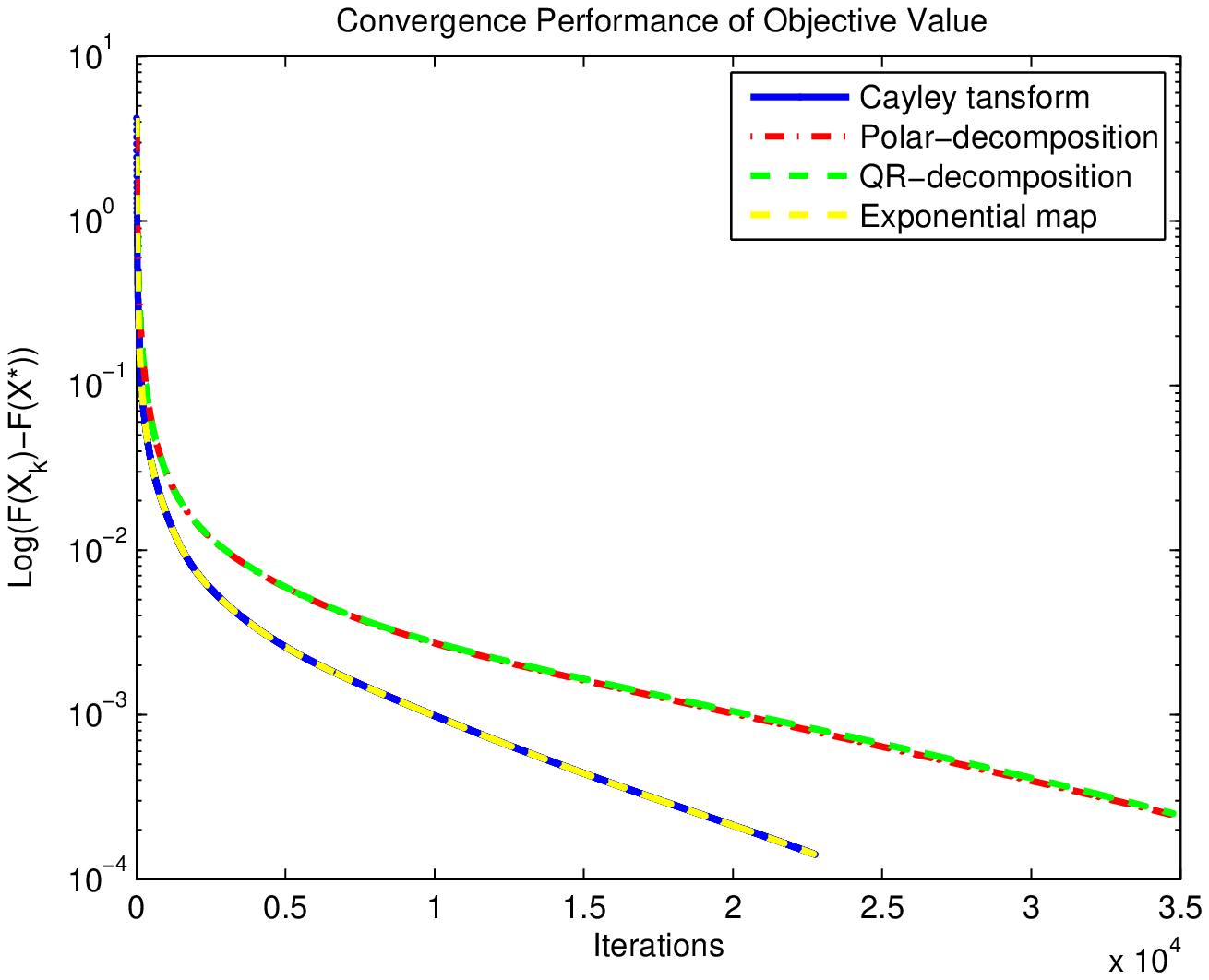}
\label{fig:6}
}
\caption{The Relative ``Fat'' Matrix.}
\label{fig:fat}
\end{figure}

\section{Conclusion}
In this paper, we gave an explicit estimate of the exponent in a {\L}ojasiewicz inequality for the (non-convex) set of critical points of Problem~\eqref{realproblem}.  Such an estimate was obtained by establishing a local error bound for the aforementioned set of critical points.  Together with known arguments, our result implies the linear convergence of a large class of line-search methods on the Stiefel manifold.  An interesting future direction would be to extend our techniques to analyze the convergence rates of first-order methods for solving structured non-convex optimization problems.

\section*{Appendix}
\appendix
\section{Proof of Proposition~\ref{prop:asy-safe}}
The proof of Proposition~\ref{prop:asy-safe} is based on the following lemma:
\begin{lemma}\label{limit}
The Armijo points $\{\alpha_k\}_{k\ge0}$ satisfy $\lim_{k\rightarrow 0}\alpha_k\left\|D_{\rho}(X_k)\right\|_F=0$.
\end{lemma}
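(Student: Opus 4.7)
The plan is a standard monotone-descent argument: combine the Armijo decrease inequality with a sharpened descent-direction bound of the form $\langle\nabla F,D_\rho\rangle\gtrsim\|D_\rho\|_F^2$ to conclude that $\alpha_k\|D_\rho(X_k)\|_F^2\to 0$, and then upgrade this to $\alpha_k\|D_\rho(X_k)\|_F\to 0$ via a simple boundedness argument for the step sizes.

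First, I would show that $\{F(X_k)\}$ is non-increasing and bounded below. The Armijo condition yields
\[ F(X_{k+1})-F(X_k) \le -c\alpha_k\,\langle\nabla F(X_k),D_\rho(X_k)\rangle, \]
while the fact that $-D_\rho$ is a descent direction (\cite[Lemma~3.1]{JD14}) ensures $\langle\nabla F(X),D_\rho(X)\rangle\ge 0$ for every $X\in\mathrm{St}(m,n)$, so the sequence is monotone non-increasing. Continuity of $F$ on the compact Stiefel manifold provides the lower bound, and therefore $\{F(X_k)\}$ converges, $F(X_k)-F(X_{k+1})\to 0$, and in particular $\alpha_k\,\langle\nabla F(X_k),D_\rho(X_k)\rangle\to 0$.

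Next, I would invoke a sharpened descent inequality of the form $\langle\nabla F(X),D_\rho(X)\rangle\ge c_0\|D_\rho(X)\|_F^2$ for some constant $c_0>0$ depending only on $\rho$. Using the identity $D_\rho(X)=(I_m-(1-2\rho)XX^T)(\nabla F(X)-X\nabla F(X)^TX)$ noted just before Theorem~\ref{theorem:A5}$'$ and splitting $D_\rho(X)$ into its components in the column span of $X$ and its orthogonal complement, a short calculation yields the bound with the explicit constant $c_0=\min\{1,1/(4\rho)\}$; this is essentially the content of \cite[Lemma~3.1]{JD14}. Combined with the previous step, this gives $\alpha_k\|D_\rho(X_k)\|_F^2\to 0$.

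Finally, since Algorithm~\ref{alg:1} is assumed to keep the initial trial step $\bar\beta_k$ uniformly bounded above (a standard safeguard for line-search methods), the accepted step size satisfies $\alpha_k\le\bar\alpha$ for some constant $\bar\alpha>0$. For any $\epsilon>0$ and $k$ large enough that $\alpha_k\|D_\rho(X_k)\|_F^2<\epsilon^2/\bar\alpha$, one checks two cases: if $\|D_\rho(X_k)\|_F\ge\epsilon/\bar\alpha$, then $\alpha_k\|D_\rho(X_k)\|_F=\alpha_k\|D_\rho(X_k)\|_F^2/\|D_\rho(X_k)\|_F<\epsilon$; otherwise $\alpha_k\|D_\rho(X_k)\|_F\le\bar\alpha\cdot\epsilon/\bar\alpha=\epsilon$. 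In either case $\alpha_k\|D_\rho(X_k)\|_F<\epsilon$, which proves the lemma. The only nontrivial ingredient of the plan is the sharpened descent inequality of the second paragraph; once that is in hand (or cited from \cite{JD14}), the rest is a routine summability-plus-boundedness exercise.
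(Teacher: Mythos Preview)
Your proposal is correct and follows essentially the same route as the paper: Armijo decrease plus the inequality $\langle\nabla F,D_\rho\rangle\ge c_0\|D_\rho\|_F^2$ yields $\alpha_k\|D_\rho(X_k)\|_F^2\to 0$, and boundedness of $\alpha_k$ then upgrades this to $\alpha_k\|D_\rho(X_k)\|_F\to 0$. The only cosmetic difference is in the last step, where the paper replaces your case split by the one-line estimate $(\alpha_k\|D_\rho(X_k)\|_F)^2=\alpha_k\cdot\alpha_k\|D_\rho(X_k)\|_F^2\le\bar\alpha\,\alpha_k\|D_\rho(X_k)\|_F^2\to 0$.
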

\begin{proof}
The Armijo point exists in each step, which guarantees a sufficient decrease. We add all the decrease together and the sum must be finite, since there is a lower bound on the function value; i.e.
\begin{equation*}
\sum_{k=0}^{+\infty} c\alpha_k \|D_{\rho}(X_k)\|_F^2<+\infty,
\end{equation*}
which implies that
 \begin{equation*}
\lim_{k\rightarrow0} \alpha_k \|D_{\rho}(X_k)\|_F^2=0.
\end{equation*}
Here, all the Armijo points have an upper bound $\gamma$.  Thus,
 \begin{equation*}
\lim_{k\rightarrow 0} \alpha_k^2 \|D_{\rho}(X_k)\|^2\leq\lim_{k\rightarrow 0} \gamma \alpha_k \|D_{\rho}(X_k)\|^2 =0.
\end{equation*}
Thus, we have
\begin{equation*}
\lim_{k\rightarrow 0}\alpha_k\|D_{\rho}(X_k)\|=0,
\end{equation*}
as desired.
\end{proof}
{\it Proof of Proposition~\ref{prop:asy-safe}.} By construction of the algorithm, with $V_k= -\alpha_kD_{\rho}(X_k)$, we have
\begin{eqnarray*}
X_{k+1}-X_k=R(X_k, V_k)-X_k=R(X_k, V_k)-(X_k+V_k)-V_k,
\end{eqnarray*}
which implies that
\begin{equation*}
\|X_{k+1}-X_k\|_F\geq \|V_k\|_F-\|R(X_k, V_k)-(X_k+V_k)\|_F.
\end{equation*}
We divide by $\|V_k\|_F$ on both sides to obtain
\begin{equation*}
\frac{\|X_{k+1}-X_k\|_F}{\|V_k\|_F} \geq 1-\frac{\|R(X_k, V_k)-(X_k+V_k)\|_F}{\|V_k\|_F}.
\end{equation*}
It follows that
\begin{equation*}
\lim_{k\rightarrow\infty}\frac{\|X_{k+1}-X_k\|_F}{\|V_k\|_F} \geq 1-\lim_{\|V_k\|_F\rightarrow 0}\frac{\|R(X_k, V_k)-(X_k+V_k)\|_F}{\|V_k\|_F}.
\end{equation*}
According to the definition of smooth retraction~\eqref{retraction}, the last term is equal to 0. Thus,
\begin{equation*}
\lim_{k\rightarrow\infty}\frac{\|X_{k+1}-X_k\|_F}{\|V_k\|_F} \geq 1.
\end{equation*}
Therefore, there exists a large enough $k$ to make sure~\eqref{eq:asy-safe} hold if we choose $1/2$. \qed

\section{Proof of Proposition~\ref{the:limit}}
Let $X\in\mathcal{X}$ be arbitrary.  Using~\eqref{eq:opt-set-sim} and the fact that $X^TX=I_n$, we have $X^TAXB = BX^TAX$.  Since both $X^TAX$ and $B$ are symmetric, this implies that $X^TAX$ and $B$ are simultaneously diagonalizable.  In particular, there exist orthogonal matrices $Q_j \in \mathcal{O}^{t_j-t_{j-1}}$ and diagonal matrices $\Sigma_j \in \mathcal{S}^{t_j-t_{j-1}}$, where $j=1,\ldots,n_B$, such that the columns of ${\rm BlkDiag}(Q_1,\ldots,Q_{n_B})$ are the eigenvectors of $B$, and that
\begin{equation} \label{eq:XAX}
X^TAX = {\rm BlkDiag}\left( Q_1^T\Sigma_1Q_1, \ldots, Q_{n_B}^T\Sigma_{n_B}Q_{n_B} \right).
\end{equation}
Now, using~\eqref{eq:opt-set-sim} again, we have $\left( AX-XX^TAX \right)B=\bz$.  Since $B$ has full rank and hence invertible, this yields $AX = XX^TAX$.  Upon letting $Y=X \cdot {\rm BlkDiag}\left( Q_1^T,\ldots,Q_{n_B}^T \right) \in {\rm St}(m,n)$ and using~\eqref{eq:XAX}, we obtain $AY = Y \cdot {\rm BlkDiag}(\Sigma_1,\ldots,\Sigma_{n_B})$.  As $\Sigma_1,\ldots,\Sigma_{n_B}$ are diagonal, this implies that each of the $n$ columns of $Y$ is an eigenvector of $A$.  To see that $X$ can be expressed in the form given on the right-hand side of~\eqref{eq:opt-set-char}, it remains to note that $A$ has $m$ eigenvectors in total, and that any set of $m$ eigenvectors of $A$ can be expressed as ${\rm BlkDiag}(P_1,\ldots,P_{n_A})$ for some $P_i \in \mathcal{O}^{s_i-s_{i-1}}$, where $i=1,\ldots,n_A$.

The converse is rather easy to verify.  Hence, the proof is completed.

\section{Proof of Proposition~\ref{prop:sq-dist-bd}}
Using~\eqref{dist} and~\eqref{eq:B-decomp}, it can be verified that
\begin{align*}
{\rm dist}^2(X,\mathcal{X}_h) &= \left\| \bar{X} - E(h) \right\|_F^2 \\
&= \min\left\{ \left. \left\| \bar{X} - E(h) \cdot {\rm BlkDiag}(Q_1,\ldots,Q_{n_B}) \right\|_F^2 \,\right|\, Q_j \in \mathcal{O}^{t_j-t_{j-1}} \mbox{ for } j=1,\ldots,n_B \right\} \\
&= \sum_{j=1}^{n_B} \min\left\{ \left. \left\| \bar{X}_j - \bar{E}_j(h)Q_j \right\|_F^2 \,\right|\, Q_j \in \mathcal{O}^{t_j-t_{j-1}} \right\}. 
\end{align*}
From the definitions of $E(h)$ in~\eqref{eq:Eh} and $\bar{E}_j(h)$ in~\eqref{eq:B-decomp}, we see that up to a rearrangement of the rows, $\bar{E}_j(h)$ takes the form $\bar{E}_j(h) = \begin{bmatrix} I_{t_j-t_{j-1}} \\ \bz \end{bmatrix}$.  Thus, to obtain the desired bound on ${\rm dist}^2(X,\mathcal{X}_h)$, it remains to prove the following:
\begin{lemma} \label{theorem_left}
Let $S = \begin{bmatrix} S_1\\S_2 \end{bmatrix} \in {\rm St}(p,q)$ be given, with $S_1 \in \R^{q\times q}$ and $S_2 \in \R^{(p-q)\times q}$.  Consider the following problem:
$$ v^* = \min\left\{ \left. \left\| S - \begin{bmatrix} I_q\\\bz \end{bmatrix} X \right\|_F^2 \,\right|\, X \in \mathcal{O}^q \right\}. $$
Suppose that $v^*<1$.  Then, we have $v^* = \Theta\left( \|S_2\|_F^2 \right)$.
\end{lemma}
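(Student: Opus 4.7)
The plan is to reduce the minimization to a closed-form SVD computation and then leverage the Stiefel constraint $S^T S = I_q$ to express everything in terms of the singular values of $S_2$.

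First I would exploit the block structure of the objective: since $\begin{bmatrix} I_q \\ \bz \end{bmatrix} X$ only affects the top $q$ rows,
$$\left\| S - \begin{bmatrix} I_q \\ \bz \end{bmatrix} X \right\|_F^2 = \|S_1 - X\|_F^2 + \|S_2\|_F^2,$$
so $v^* = \|S_2\|_F^2 + \min_{X\in\mathcal{O}^q} \|S_1 - X\|_F^2$. Writing the SVD $S_1 = U\Sigma V^T$ with $\Sigma = {\rm Diag}(\sigma_1,\ldots,\sigma_q)$, the orthogonal Procrustes identity $\max_{X\in\mathcal{O}^q}\langle S_1, X\rangle = \|S_1\|_*$, attained at $X = UV^T$, yields
$$\min_{X\in\mathcal{O}^q} \|S_1 - X\|_F^2 \;=\; \|S_1\|_F^2 + q - 2\|S_1\|_* \;=\; \sum_{i=1}^q (1-\sigma_i)^2.$$

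Next, the Stiefel identity $S_1^T S_1 + S_2^T S_2 = I_q$ implies that if $\mu_1,\ldots,\mu_q \in [0,1]$ are the eigenvalues of $S_2^T S_2$, then $\sigma_i = \sqrt{1 - \mu_i}$ and $\|S_2\|_F^2 = \sum_i \mu_i$. Substituting and using the identity $(1-\sqrt{1-\mu})^2 = 2 - \mu - 2\sqrt{1-\mu}$, I obtain
$$v^* \;=\; \sum_{i=1}^q \mu_i + \sum_{i=1}^q (1 - \sqrt{1-\mu_i})^2 \;=\; 2\sum_{i=1}^q (1 - \sqrt{1-\mu_i}) \;=\; 2\sum_{i=1}^q \frac{\mu_i}{1 + \sqrt{1-\mu_i}}.$$
Since $\mu_i \in [0,1]$, the denominator lies in $[1,2]$, which immediately gives the two-sided bound $\|S_2\|_F^2 \le v^* \le 2\|S_2\|_F^2$, i.e.\ $v^* = \Theta(\|S_2\|_F^2)$.

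I do not anticipate a serious obstacle; the only delicate point is the algebraic collapse from $\sum_i \mu_i + \sum_i(1-\sqrt{1-\mu_i})^2$ to $2\sum_i(1-\sqrt{1-\mu_i})$. The hypothesis $v^* < 1$ is inherited from the ambient setting of Proposition~\ref{prop:sq-dist-bd} (where it guarantees we are genuinely near $\mathcal{X}_h$); within the lemma itself it is not strictly needed for the $\Theta$-bound, but it does certify that each $\mu_i$ is bounded away from $1$ (equivalently, that $S_1$ has no singular value too close to $0$), which is precisely what makes the local linear relation between $v^*$ and $\|S_2\|_F^2$ usable downstream without degeneracy.
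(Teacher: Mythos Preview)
Your argument is correct and follows the same outline as the paper---block decomposition, orthogonal Procrustes, then the Stiefel identity $S_1^TS_1+S_2^TS_2=I_q$ to relate $\|\Sigma-I_q\|_F^2$ to $\|S_2\|_F^2$. The only difference is in the final algebra: the paper keeps the matrix expression $I_q-\Sigma=(I_q+\Sigma)^{-1}V^TS_2^TS_2V$ and bounds $\|\Sigma-I_q\|_F^2$ by $\Theta(\|S_2\|_F^4)$, obtaining $\tfrac14\|S_2\|_F^4+\|S_2\|_F^2\le v^*\le \|S_2\|_F^4+\|S_2\|_F^2$ and then invoking $v^*<1$ to absorb the quartic term; you instead pass to eigenvalues $\mu_i$ of $S_2^TS_2$, collapse $\mu_i+(1-\sqrt{1-\mu_i})^2$ to $2\mu_i/(1+\sqrt{1-\mu_i})$, and read off the sharper two-sided bound $\|S_2\|_F^2\le v^*\le 2\|S_2\|_F^2$ directly. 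Your version is a little cleaner and, as you observe, does not actually need the hypothesis $v^*<1$.
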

\begin{proof}
Since
$$ \left\| S - \begin{bmatrix} I_q\\\bz \end{bmatrix} X \right\|_F^2 = \|S_1-X\|_F^2 + \|S_2\|_F^2, $$
it suffices to consider the problem
\begin{equation} \label{eq:aux-procrust}
\min\left\{ \|S_1 - X\|_F^2 \mid X \in \mathcal{O}^q \right\}.
\end{equation}
Problem~\eqref{eq:aux-procrust} is an instance of the orthogonal Procrustes problem, whose optimal solution is given by $X^*=UV^T$, where $S_1=U\Sigma V^T$ is the singular value decomposition of $S_1$~\cite{S66}.  It follows that
$$ v^* = \|\Sigma-I_q\|_F^2 + \|S_2\|_F^2. $$
Now, since $S\in{\rm St}(p,q)$, we have $S^TS = S_1^TS_1 + S_2^TS_2 = I_q$, or equivalently,
$$ \Sigma^2 + V^TS_2^TS_2V = I_q. $$
This implies that $\bz \preceq \Sigma \preceq I_q$ and
$$
I_q - \Sigma = (I_q+\Sigma)^{-1} \left( V^TS_2^TS_2V \right).
$$
It follows that
$$ \frac{1}{4}\|S_2\|_F^4 + \|S_2\|_F^2 \le v^* \le \|S_2\|_F^4 + \|S_2\|_F^2. $$
This, together with the fact that $\|S_2\|_F^2 \le v^* < 1$, yields $v^* = \Theta\left( \|S_2\|_F^2 \right)$, as desired.
\end{proof}

\section{Proof of Proposition~\ref{prop:grad-prelim-bd}}
Recall that 
$$ P^* = {\rm BlkDiag}\left( P_1^*,\ldots,P_{n_A}^* \right) \in \mathcal{O}^m, \, Q^* = {\rm BlkDiag}\left( Q_1^*,\ldots,Q_{n_B}^* \right) \in \mathcal{O}^n, \, \bar{X} = (P^*)^TX(Q^*)^T. $$
Upon observing that $AP^*=P^*A$, $BQ^*=Q^*B$, $B={\rm BlkDiag}\left( b_{t_1}I_{t_1-t_0},\ldots,b_{t_{n_B}}I_{t_{n_B}-t_{n_B-1}} \right)$ and using~\eqref{eq:B-decomp}, we compute
\begin{align}
\left\| AXB - XBX^TAX \right\|_F^2 &= \left\| AP^*\bar{X}Q^*B - P^*\bar{X}Q^*B(Q^*)^T\bar{X}^T(P^*)^TAP^*\bar{X}Q^* \right\|_F^2 \nonumber \\
&= \left\| P^*\left( A\bar{X}B-\bar{X}B\bar{X}^TA\bar{X} \right)Q^* \right\|_F^2 \nonumber \\
&= \left\| A\bar{X}B-\bar{X}B\bar{X}^TA\bar{X} \right\|_F^2 \nonumber \\
&= \sum_{j=1}^{n_B} \left\| b_{t_j}A\bar{X}_j - \sum_{k=1}^{n_B} b_{t_k}\bar{X}_k\left( \bar{X}_k^TA\bar{X}_j \right) \right\|_F^2. \label{eq:grad-express}
\end{align}
Now, observe that the columns of $\bar{X}$ are orthonormal and span an $n$-dimensional subspace $\mathcal{L}$.  In particular, for $j=1,\ldots,n_B$, each column of $A\bar{X}_j$ can be decomposed as $u+v$, where $u$ is a linear combination of the columns of $\bar{X}$ and $v\in\mathcal{L}^\perp$, the orthogonal complement of $\mathcal{L}$.  In view of the structure of $\bar{X}$ in~\eqref{eq:B-decomp}, this leads to
$$ A\bar{X}_j = \sum_{k=1}^{n_B} \bar{X}_k\left( \bar{X}_k^TA\bar{X}_j \right) + T_j, $$
where $T_j \in\R^{m\times(t_j-t_{j-1})}$ is formed by projecting the columns of $A\bar{X}_j$ onto $\mathcal{L}^\perp$.  Hence,
\begin{align}
 \left\| b_{t_j}A\bar{X}_j - \sum_{k=1}^{n_B} b_{t_k}\bar{X}_k\left( \bar{X}_k^TA\bar{X}_j \right) \right\|_F^2 &= \sum_{k\not=j} (b_{t_j}-b_{t_k})^2 \left\| \bar{X}_k \left( \bar{X}_k^TA\bar{X}_j \right) \right\|_F^2 + b_{t_j}^2 \|T_j\|_F^2 \nonumber \\
&= \Omega\left( \sum_{k\not=j} \left\| \bar{X}_k \left( \bar{X}_k^TA\bar{X}_j \right) \right\|_F^2 + \|T_j\|_F^2 \right) \label{eq:grad-lb} \\
&= \Omega\left( \left\| A\bar{X}_j - \bar{X}_j\bar{X}_j^TA\bar{X}_j \right\|_F^2 \right), \nonumber
\end{align}
where~\eqref{eq:grad-lb} follows from the fact that $b_{t_j}\not=b_{t_k}$ whenever $j\not=k$ and $b_{t_j}\not=0$ since $B$ is assumed to have full rank.  By combining the above with~\eqref{eq:grad-express}, the proof is completed.

\section{Proof of Proposition~\ref{prop:grad-term-bd}}
Consider a fixed $j\in\{1,\ldots,n_B\}$.  Let $\bar{x}_k$ be the $k$-th column of $\bar{X}_j$ and $(\bar{x}_k)_\alpha$ be the $\alpha$-th entry of $\bar{x}_k$, where $k=1,\ldots,t_j-t_{j-1}$ and $\alpha=1,\ldots,m$.
Since ${\rm dist}(X,\mathcal{X}_h) = \left\| \bar{X}-E(h) \right\|_F \le \delta$, using the definition of $E(h)$ in~\eqref{eq:Eh}, we have
$$ 
(\bar{x}_k)_\alpha = \left\{
\begin{array}{c@{\quad}l}
1+O(\delta) &\mbox{if } \alpha=\pi(k), \\
O(\delta) & \mbox{otherwise},
\end{array}
\right.
$$ 
where $\pi(k)$ is the coordinate of the $k$-th column of $\bar{E}_j(h)$ that equals 1.  Since $\pi(k)\not=\pi(\ell)$ whenever $k\not=\ell$, it follows that
$$
\bar{x}_k^TA\bar{x}_{\ell} = \left\{
\begin{array}{c@{\quad}l}
a_{\pi(k)} + O(\delta) & \mbox{if } k=\ell, \\
O(\delta) & \mbox{otherwise}.
\end{array}
\right.
$$
Now, let $\Delta_k$ be the $k$-th column of $A\bar{X}_j-\bar{X}_j\bar{X}_j^TA\bar{X}_j$, where $k=1,\ldots,t_j-t_{j-1}$.  Then,
$$
\Delta_k = A\bar{x}_k - \sum_{\ell=1}^{t_j-t_{j-1}} \bar{x}_\ell \left( \bar{x}_\ell^TA\bar{x}_k \right) = \left( A - a_{\pi(k)}I_m \right)\bar{x}_k - O(\delta) \cdot \left( \sum_{\ell=1}^{t_j-t_{j-1}} \bar{x}_\ell \right).
$$
Let $\Pi_{\mathcal{I}_j}$ be the projector onto the coordinates in $\mathcal{I}_j$.  By Proposition~\ref{prop:sq-dist-bd} and the assumption that ${\rm dist}(X,\mathcal{X}_h) \le \delta$, we have
$$ \sum_{\ell=1}^{t_j-t_{j-1}} \left\| \Pi_{\mathcal{I}_j}(\bar{x}_\ell) \right\|_2^2 = \sum_{k\in\mathcal{I}_j} \left\| \left[ \bar{X}_j \right]_k \right\|_2^2 = O(\delta). $$
Hence,
\begin{align}
\left\| \Pi_{\mathcal{I}_j}(\Delta_k) \right\|_2 &\ge \left\| \Pi_{\mathcal{I}_j} \left(\left( A - a_{\pi(k)}I_m \right)\bar{x}_k\right) \right\|_2 - O(\delta) \cdot \left( \sum_{\ell=1}^{t_j-t_{j-1}} \left\| \Pi_{\mathcal{I}_j}(\bar{x}_\ell) \right\|_2 \right) \nonumber \\
&\ge \left\| \Pi_{\mathcal{I}_j} \left(\left( A - a_{\pi(k)}I_m \right)\bar{x}_k\right) \right\|_2 - O(\delta^2). \label{eq:proj-lb}
\end{align}
Let $i'\in\{0,1,\ldots,n_A-1\}$ be such that $s_{i'}+1 \le \pi(k) \le s_{i'+1}$.  Then, we have
\begin{align}
\left\| \Pi_{\mathcal{I}_j} \left(\left( A - a_{\pi(k)}I_m \right)\bar{x}_k\right) \right\|_2^2 &= \sum_{i\not=i'} \sum_{\alpha\in\mathcal{I}_j \cap \{s_i+1,\ldots,s_{i+1}\}} \left(\left( a_{s_i+1} - a_{\pi(k)} \right)(\bar{x}_k)_\alpha \right)^2 \nonumber \\
&= \sum_{i\not=i'} \sum_{\alpha\in\mathcal{I}_j \cap \{s_i+1,\ldots,s_{i+1}\}} \Omega\left( (\bar{x}_k)_\alpha^2 \right) \nonumber \\
&= \Omega\left( \left\| \Pi_{\mathcal{I}_j} \left( \bar{x}_k\right) \right\|_2^2 \right) - O\left( \left\| \Pi_{\mathcal{I}_j \cap \{s_{i'}+1,\ldots,s_{i'+1}\}} (\bar{x}_k) \right\|_2^2 \right). \label{eq:proj-lb-2}
\end{align}
To bound the term $\left\| \Pi_{\mathcal{I}_j \cap \{s_{i'}+1,\ldots,s_{i'+1}\}} (\bar{x}_k) \right\|_2^2$, we proceed as follows.  Let $Y = X(Q^*)^T \in {\rm St}(m,n)$ and decompose it as
$$
Y = \begin{bmatrix} Y_{11} & \cdots & Y_{1n_A} \\ \vdots & \ddots & \vdots \\ Y_{n_A1} & \cdots & Y_{n_An_A} \end{bmatrix},
$$
where $Y_{ii} \in \R^{(s_i-s_{i-1})\times h_i}$, for $i=1,\ldots,n_A$.  Observe that
\begin{align}
{\rm dist}^2(X,\mathcal{X}_h) &= \min\left\{ \left\| Y - {\rm BlkDiag}(P_1,\ldots,P_{n_A}) \cdot E(h) \right\|_F^2 \mid P_i \in \mathcal{O}^{s_i-s_{i-1}} \mbox{ for }i=1,\ldots,n_A \right\} \nonumber \\
&= \sum_{1\le i\not=j \le n_A} \|Y_{ij}\|_F^2 + \sum_{i=1}^{n_A} \min\left\{ \left. \left\| Y_{ii} - P_i \begin{bmatrix} I_{h_i} \\ \bz \end{bmatrix} \right\|_F^2 \,\right|\, P_i\in\mathcal{O}^{s_i-s_{i-1}} \right\}. \label{eq:subp}
\end{align}
The following lemma establishes a bound on the second term in~\eqref{eq:subp}:
\begin{lemma} \label{prop:subp-opt}
For $i=1,\ldots,n_A$, let
\begin{equation} \label{eq:Y-procrust}
v_i^* = \min\left\{ \left. \left\| Y_{ii} - P_i \begin{bmatrix} I_{h_i} \\ \bz \end{bmatrix} \right\|_F^2 \,\right|\, P_i \in \mathcal{O}^{s_i-s_{i-1}} \right\}.
\end{equation}
Then, we have
$$ v_i^*=\Theta\left( \left\| \sum_{j\not=i} Y_{ji}^TY_{ji} \right\|_F^2 \right). $$
\end{lemma}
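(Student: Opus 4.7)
The plan is to recognize \eqref{eq:Y-procrust} as a classical orthogonal Procrustes-type problem and then exploit the Stiefel constraint on $Y$ to tie the singular values of $Y_{ii}$ to the spectrum of $M := \sum_{j\neq i} Y_{ji}^TY_{ji}$. First I would observe that $P_i \begin{bmatrix} I_{h_i}\\ \bz \end{bmatrix}$ is exactly the first $h_i$ columns of $P_i$, and that as $P_i$ ranges over $\mathcal{O}^{s_i-s_{i-1}}$, these ranges over all of ${\rm St}(s_i-s_{i-1},h_i)$ (every semi-orthogonal matrix can be completed to an orthogonal one by Gram--Schmidt). Hence
$$ v_i^* = \min\left\{ \|Y_{ii}-Z\|_F^2 \,\left|\, Z \in {\rm St}(s_i-s_{i-1},h_i) \right. \right\}. $$

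Next I would take a thin SVD $Y_{ii} = U_1\Sigma_1 V_1^T$ with $\Sigma_1 = {\rm Diag}(\sigma_1,\ldots,\sigma_{h_i})$, $U_1 \in {\rm St}(s_i-s_{i-1},h_i)$, and $V_1 \in \mathcal{O}^{h_i}$. Expanding $\|Y_{ii}-Z\|_F^2 = \|Y_{ii}\|_F^2 + h_i - 2\langle Y_{ii},Z\rangle$ and using von Neumann's trace inequality (or the standard argument from~\cite{S66}, invoked already in Lemma~\ref{theorem_left}) gives $Z^* = U_1V_1^T$ and
$$ v_i^* = \sum_{l=1}^{h_i} (\sigma_l - 1)^2. $$

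Then I would bring in the Stiefel constraint $Y^TY = I_n$. Restricted to the $i$-th block column of $Y$, it yields $\sum_{j=1}^{n_A} Y_{ji}^TY_{ji} = I_{h_i}$, so $Y_{ii}^TY_{ii} = I_{h_i} - M$. Since $V_1\Sigma_1^2V_1^T = Y_{ii}^TY_{ii} = I_{h_i} - M$, the squared singular values $\sigma_l^2$ coincide (up to ordering) with $1-\mu_l$, where $\mu_1,\ldots,\mu_{h_i}$ are the eigenvalues of $M$. Because $M$ is a sum of Gram matrices, $M \succeq \bz$; and $M = I_{h_i} - Y_{ii}^TY_{ii} \preceq I_{h_i}$. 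Therefore $\mu_l \in [0,1]$ and $\sigma_l \in [0,1]$.

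Finally I would factor $1-\sigma_l^2 = (1-\sigma_l)(1+\sigma_l) = \mu_l$ to obtain $1-\sigma_l = \mu_l/(1+\sigma_l)$. Since $1+\sigma_l \in [1,2]$, this gives the two-sided bound $\mu_l^2/4 \le (1-\sigma_l)^2 \le \mu_l^2$; summing over $l$ and using $\sum_l \mu_l^2 = \|M\|_F^2$ yields $\tfrac14\|M\|_F^2 \le v_i^* \le \|M\|_F^2$, which is the stated $\Theta$-bound. No serious obstacle is expected; the one point to be careful about is ensuring that the reduction from minimizing over $\mathcal{O}^{s_i-s_{i-1}}$ to minimizing over ${\rm St}(s_i-s_{i-1},h_i)$ incurs no loss, and that the SVD-based closed-form optimum really is attained by a $Z$ of the form ``first $h_i$ columns of some orthogonal $P_i$.''
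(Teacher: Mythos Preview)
Your proof is correct and essentially the same as the paper's: both recognize~\eqref{eq:Y-procrust} as an orthogonal Procrustes problem, use an SVD of $Y_{ii}$ to obtain $v_i^*=\sum_l(1-\sigma_l)^2$, invoke the Stiefel constraint on $Y$ to get $Y_{ii}^TY_{ii}=I_{h_i}-M$ (so $\sigma_l^2=1-\mu_l$), and finish with the two-sided bound $\mu_l/2\le 1-\sigma_l\le\mu_l$. The only cosmetic difference is that the paper uses a full SVD and writes down the optimal $P_i^*$ explicitly (then defers the last step to Lemma~\ref{theorem_left}), whereas you pass to ${\rm St}(s_i-s_{i-1},h_i)$ via a thin SVD and spell out the final $\Theta$-bound directly.
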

Let us defer the proof of Lemma~\ref{prop:subp-opt} to the end of this section.  Together with~\eqref{eq:subp}, Lemma~\ref{prop:subp-opt} implies that
$$ {\rm dist}^2(X,\mathcal{X}_h) = \sum_{1\le i\not=j \le n_A} \|Y_{ij}\|_F^2 + \sum_{i=1}^{n_A} \Theta\left( \left\| \sum_{j\not=i} Y_{ji}^TY_{ji} \right\|_F^2 \right). $$
Since ${\rm dist}(X,\mathcal{X}_h) \le \delta$ for some $\delta\in(0,1)$, we have $\sum_{1\le i\not=j\le n_A} \|Y_{ij}\|_F^2 = O(\delta^2)$.  This implies that
$$ v_i^* = O\left( \left( \sum_{j\not=i} \|Y_{ji}\|_F^2 \right)^2 \right) = O(\delta^4) $$
for $i=1,\ldots,n_A$.

Now, decompose $\bar{X}=(P^*)^TY$ as
$$ \begin{bmatrix} \bar{X}_{11} & \cdots & \bar{X}_{1n_A} \\ \vdots & \ddots & \vdots \\ \bar{X}_{n_A1} & \cdots & \bar{X}_{n_An_A} \end{bmatrix}, $$
where $\bar{X}_{ii} = (P_i^*)^TY_{ii} \in \R^{(s_i-s_{i-1})\times h_i}$ for $i=1,\ldots,n_A$.  Note that for $i=1,\ldots,n_A$, we have
$$ v_i^* = \left\| \bar{X}_{ii} - \begin{bmatrix} I_{h_i} \\ \bz \end{bmatrix} \right\|_F^2. $$
Moreover, observe that $\Pi_{\mathcal{I}_j \cap \{s_{i'}+1,\ldots,s_{i'+1}\}} (\bar{x}_k)$ is part of $\bar{X}_{i'+1,i'+1}$ and does not intersect the diagonal of the top $h_{i'+1}\times h_{i'+1}$ block of $\bar{X}_{i'+1,i'+1}$.  Thus, by Lemma~\ref{prop:subp-opt},
$$ \left\| \Pi_{\mathcal{I}_j \cap \{s_{i'}+1,\ldots,s_{i'+1}\}} (\bar{x}_k) \right\|_2^2 \le v_{i'+1}^* = O(\delta^4). $$
Together with~\eqref{eq:proj-lb} and~\eqref{eq:proj-lb-2}, this yields
$$ \left\| \Pi_{\mathcal{I}_j}(\Delta_k) \right\|_2^2 \ge \Omega \left( \left\| \Pi_{\mathcal{I}_j} \left( \bar{x}_k\right) \right\|_2^2 \right) - O(\delta^3). $$
It follows that
\begin{align*}
\left\| A\bar{X}_j-\bar{X}_j\bar{X}_j^TA\bar{X}_j \right\|_F^2 &= \sum_{k=1}^{t_j-t_{j-1}} \|(\Delta_k)\|_2^2 \\
&\ge \sum_{k=1}^{t_j-t_{j-1}}  \left\| \Pi_{\mathcal{I}_j}(\Delta_k) \right\|_2^2 \\
&\ge \sum_{k=1}^{t_j-t_{j-1}} \Omega\left( \left\| \Pi_{\mathcal{I}_j} \left( \bar{x}_k\right) \right\|_2^2 \right) - O(\delta^3) \\
&= \sum_{k\in\mathcal{I}_j} \Omega\left( \left\|  \left[\bar{X}_j\right]_k \right\|_2^2 \right) - O(\delta^3).
\end{align*}
Upon summing over $j=1,\ldots,n_B$ and using Proposition~\ref{prop:sq-dist-bd}, we obtain the desired bound.

To complete the proof, it remains to prove Lemma~\ref{prop:subp-opt}.

{\it Proof of Lemma~\ref{prop:subp-opt}.} Consider a fixed $i\in\{1,\ldots,n_A\}$.  Note that Problem~\eqref{eq:Y-procrust} is again an instance of the orthogonal Procrustes problem.  Hence, by the result in~\cite{S66}, an optimal solution to Problem~\eqref{eq:Y-procrust} is given by
$$ P_i^* = H_i \begin{bmatrix} W_i^T & \bz \\ \bz & I_{s_i-s_{i-1}-h_i} \end{bmatrix}, $$
where $Y_{ii}=H_i \begin{bmatrix} \Sigma_i \\ \bz \end{bmatrix} W_i^T$ is a singular value decomposition of $Y_{ii}$ with $H_i \in \mathcal{O}^{s_i-s_{i-1}}$, $W_i \in \mathcal{O}^{h_i}$, and $\Sigma_i \in \mathcal{S}^{h_i}$ being diagonal.  It follows from~\eqref{eq:Y-procrust} that
$$ v_i^* = \left\| Y_{ii} - P_i^* \begin{bmatrix} I_{h_i} \\ \bz \end{bmatrix} \right\|_F^2 = \|\Sigma_i - I_{h_i}\|_F^2. $$
Now, since $Y\in{\rm St}(m,n)$, we have
$$ Y_{ii}^TY_{ii} + \sum_{j\not=i} Y_{ji}^TY_{ji} = W_i\Sigma_i^2W_i^T + \sum_{j\not=i} Y_{ji}^TY_{ji} = I_{h_i}, $$
or equivalently,
$$ \Sigma_i^2 + W_i^T \left( \sum_{j\not=i} Y_{ji}^TY_{ji} \right)W_i = I_{h_i}. $$
By following the arguments in the proof of Lemma~\ref{theorem_left}, we conclude that
$$ \|\Sigma_i-I_{h_i}\|_F^2 = \Theta\left( \left\| \sum_{j\not=i} Y_{ji}^TY_{ji} \right\|_F^2 \right), $$
as desired. \hfill \qed

\bibliography{sdpbib}
\bibliographystyle{abbrv}

\end{document}